\def\P{{\mathbb P}}
\def\CC{{\mathbb C}}
\def\FF{{\mathbb F}}
\def\RR{{\mathbb R}}
\def\ZZ{{\mathbb Z}}
\def\QQ{{\mathbb Q}}
\def\Spec{\operatorname{Spec}}
\def\fX{{\mathfrak X}}
\def\cC{{\mathcal C}}
\newcommand{\Div}{\operatorname{Div}\nolimits}
\DeclareMathOperator{\divisor}{div}
\DeclareMathOperator{\ord}{ord}
\DeclareMathOperator{\Pic}{Pic}
\DeclareMathOperator{\Sym}{Sym}
\numberwithin{equation}{section}
\newtheorem{thm}[equation]{Theorem}
\newtheorem{prop}[equation]{Proposition}
\newtheorem{cor}[equation]{Corollary}
\newtheorem{conj}[equation]{Conjecture}
\newtheorem*{BNT}{Brill-Noether Theorem}
\newtheorem*{TRR}{Tropical Riemann-Roch Theorem}
\newtheorem*{LT}{Luo's Theorem}
\newtheorem*{SL}{Specialization Lemma}
\theoremstyle{definition}
\newtheorem{defn}[equation]{Definition}
\newtheorem{ex}[equation]{Example}
\theoremstyle{remark}
\newtheorem{notn}[equation]{Notation}
\newtheorem{rem}[equation]{Remark}
\begin{document}

\title{A tropical proof of the Brill-Noether Theorem}
\author{Filip Cools}
\address{Department of Mathematics, K. U. Leuven, Celestijnenlaan 200B, B-3001 Leuven, Belgium \ \ \tt filip.cools@wis.kuleuven.be}
\author{Jan Draisma}
\address{Department of Mathematics and Computer Science, TU/E, PO Box 513, 5600 MB Eindhoven, The Netherlands  \ \ \tt j.draisma@tue.nl}
\author{Sam Payne}
\address{Mathematics Department, Yale University, 10 Hillhouse Ave, New Haven, CT, 06511 USA  \ \ \tt sam.payne@yale.edu}
\author{Elina Robeva}
\address{Department of Mathematics, Stanford University, Bldg 380, Stanford, CA 94305, USA  \ \ \tt erobeva@stanford.edu}

\begin{abstract}
We produce Brill-Noether general graphs in every genus, confirming a conjecture of Baker and giving a new proof of the Brill-Noether Theorem, due to Griffiths and Harris, over any algebraically closed field.
\end{abstract}

\maketitle

\section{Introduction}

Let $X$ be a smooth projective curve of genus $g$.  Brill-Noether theory studies the geometry of the subscheme $W^r_d(X)$ of $\Pic_d(X)$ parametrizing linear equivalence classes of divisors of degree $d$ that move in a linear system of dimension at least $r$, especially when the curve $X$ is general.  This subscheme can be realized as a degeneracy locus of a natural map of vector bundles, with naive expected dimension
\[
\rho = g - (r+1)(g-d+r).
\]
The Brill-Noether Theorem, due to Griffiths and Harris, says that the naive dimension count is essentially correct, for a general curve.

\begin{BNT}
[\cite{GriffithsHarris80}]  Suppose $X$ is general.
\begin{enumerate}
\item If $\rho$ is negative then $W^r_d(X)$ is empty.
\item If $\rho$ is nonnegative then $W^r_d(X)$ has dimension $\min\{\rho, g\}$.
\end{enumerate}
\end{BNT}

\noindent Curves that are easy to write down, such as complete intersections in projective spaces and Grassmannians, have many more special divisors than this dimension count predicts, so the generality hypothesis is crucial.  The fact that $W^r_d$ is nonempty and of dimension at least $\min \{ \rho, g \}$ for an arbitrary curve, when $\rho$ is nonnegative, is significantly easier, and was assumed by Griffiths and Harris in their proof of (2).  General results on degeneracy loci say that $W^r_d$ must support the expected cohomology class given by the Thom-Porteous determinantal formula in the Chern classes of the bundles.  An explicit computation, due to Kempf, Kleiman, and Laksov, shows that this expected class is a nonzero multiple of a power of the theta divisor \cite{Kempf71, KleimanLaksov72}.  

The nonexistence of special divisors when $\rho$ is negative, and the upper bound on the dimension of $W^r_d$ when $\rho$ is nonnegative, for a general curve, is considered much deeper.  The depth of this result is related to the difficulty of writing down a sufficiently general curve in high genus, or even a reasonable criterion for a curve to be sufficiently general.  The original proof uses a degeneration to a $g$-nodal rational curve, followed by a very subtle transversality argument for certain Schubert varieties associated to osculating flags of a rational normal curve.  Two subsequent proofs \cite{EisenbudHarris83c, Lazarsfeld86} also continue to be heavily cited after more than twenty years; the Eisenbud-Harris proof uses limits of linear series and a degeneration to a cuspidal rational curve.  Lazarsfeld's proof involves no degenerations; he shows that a general hyperplane section of a complex $K3$ surface of Picard number 1 is Brill-Noether general, by which we mean that $W^r_d$ is empty when $\rho$ is negative and of dimension $\min\{ \rho, g\}$ otherwise.  Here we give a novel ``tropical" proof of the Brill-Noether Theorem, replacing the subtle transversality arguments in the original proof with the combinatorics of chip-firing on certain graphs.  The graphs encode degenerations to semistable unions of rational curves without self-intersection, which may be realized as degenerations of the $g$-nodal rational curves used by Griffiths and Harris.

\smallskip

Our starting points are the theory of ranks of divisors on graphs, as developed by Baker and Norine in their groundbreaking paper \cite{BakerNorine07}, and Baker's Specialization Lemma \cite{Baker08}, which says that the dimension of the complete linear system of a divisor on a smooth curve over a discretely valued field is less than or equal to the rank of its specialization to the dual graph of the special fiber of a strongly semistable regular model.  This allows one to translate geometric results about existence of special divisors between curves and metric graphs.  For instance, when $\rho$ is nonnegative, the nonemptiness of $W^r_d$ for arbitrary $X$ implies that every metric graph $\Gamma$ of genus $g$ with rational edge lengths has a divisor of degree $d$ and rank at least $r$, and a rational approximation argument from \cite{GathmannKerber08} then shows that the same holds for metric graphs with arbitrary edge lengths.

For (1), the nonexistence part of the Brill-Noether Theorem, the natural implication goes in the other direction.  Suppose $X$ has a strongly semistable regular model whose special fiber has dual graph $\Gamma$.  If $\Gamma$ has no divisor of degree $d$ and rank at least $r$, then neither does $X$. Similarly, if there is an effective divisor of degree $r + \rho + 1$ on $\Gamma$ that is not contained in any effective divisor of degree $d$ and rank at least $r$, then there is such a divisor on $X$, and it follows that the dimension of $W^r_d(X)$ is at most $\rho$.  See Section~\ref{sec:classical} for further details.

The graph $\Gamma$ that we consider is combinatorially a chain of $g$ loops, as shown in Figure~\ref{fig:Gamma}, with generic edge lengths.  Here, generic means that the tuple of lengths $(\ell_1,\ldots,\ell_g,m_1,\ldots,m_g)$ in $\RR_{>0}^{2g}$ lies outside the union of a finite collection of hyperplanes.  See Definition~\ref{defn:Generic} for a precise statement.

\begin{figure}
\includegraphics{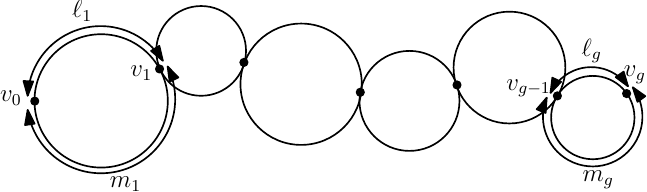}
\caption{The graph $\Gamma$ is a chain of $g$ loops with generic edge lengths.}
\label{fig:Gamma}
\end{figure}

\begin{thm} \label{thm:Main}
Suppose $\Gamma$ is a chain of $g$ loops with generic edge lengths.
\begin{enumerate}
\item If $\rho$ is negative then $\Gamma$ has no effective divisors of degree $d$ and rank at least $r$.
\item If $\rho$ is nonnegative then $\Gamma$ has no effective divisors of degree $d$ and rank at least $r$ that contain $(r + \rho + 1) \, v_0$.
\end{enumerate}
\end{thm}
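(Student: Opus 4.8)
The plan is to translate everything into the combinatorics of $v_0$-reduced divisors and chip-firing on $\Gamma$, and then to extract a lattice-path condition whose feasibility is controlled by $\rho$. Recall the Baker--Norine criterion: an effective divisor $D$ has rank $r(D) \geq r$ if and only if, for every effective divisor $E$ of degree $r$, the class of $D - E$ contains an effective representative. Each divisor class has a unique $v_0$-reduced representative, and such a representative is effective exactly when its coefficient at $v_0$ is nonnegative. So I would first recast the problem entirely in terms of the $v_0$-reduced forms of the classes $D - E$: statement (1) becomes the assertion that no degree-$d$ class can pass this test for all $E$ once $\rho < 0$, and statement (2) is the same assertion restricted to classes with an enforced block of $r + \rho + 1$ chips at $v_0$.

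Next I would pin down the shape of $v_0$-reduced divisors on the chain. Writing $v_0, \dots, v_g$ for the successive connecting vertices and $\gamma_i$ for the $i$-th loop (with edge lengths $\ell_i, m_i$), Dhar's burning algorithm shows that a $v_0$-reduced divisor of degree $d$ carries exactly one chip on each loop $\gamma_i$, at some position $p_i \in \gamma_i$, with the remaining degree accumulated at $v_0$. Since the loops are separated by cut vertices, chip-firing on one loop does not interact with the others, so the positions $(p_1, \dots, p_g)$ are independent coordinates identifying $\Pic^d(\Gamma)$ with the product of the $g$ circles. The effect of subtracting $E$ and re-reducing can then be computed loop by loop: moving a chip across a connecting vertex $v_i$ is a chip-firing move whose outcome is governed by the ratio $\ell_i/m_i$ and by the residue of $p_i$ modulo the relevant lattice.

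The heart of the argument is to package this loop-by-loop reduction as a single \emph{lingering lattice path} $w_0, w_1, \dots, w_g$ in $\ZZ^{r+1}$, recording how the deficiency is distributed among the $r+1$ relevant coordinates after each loop is processed. Here genericity is essential: because the tuple $(\ell_1,\dots,\ell_g,m_1,\dots,m_g)$ avoids a finite union of hyperplanes, the positions $p_i$ are in generic position modulo the lattices that arise, so that at each connecting vertex at most one chip can cross and the amount it moves is forced. Consequently each loop contributes either a determined step of the path (in a coordinate direction dictated by $p_i$) or, only for the special residues, a \emph{lingering} step at which the path stays put. I would prove that $r(D) \geq r$ holds precisely when the associated path can be kept inside the open region cut out by the strict inequalities among the $r+1$ coordinates for all $g$ steps; the strictness of these inequalities is exactly what matches the open, generic-position hypothesis on the edge lengths.

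Finally I would count. Staying inside the open chamber for all $g$ steps forces at least $(r+1)(g-d+r)$ of the steps to be determined (non-lingering), so the number of lingering steps is at most $g - (r+1)(g-d+r) = \rho$; since it is also at least $0$, a valid path can exist only when $\rho \geq 0$, which gives (1). For (2), enforcing the block $(r+\rho+1)\,v_0$ increases the displacement the path must cover by $\rho + 1$, raising the number of required determined steps by the same amount, so the lingering budget drops below zero and no valid path exists, which is exactly the desired nonexistence. The main obstacle is the middle step: showing that the genericity hypothesis makes the chip-firing reduction genuinely deterministic and that it is faithfully recorded by the lattice-path model. Concretely, I must identify the precise finite set of hyperplanes to avoid and verify that off them no coincidence permits extra chip movement---any such movement would create additional special divisors and break the dimension count---so that the rigid path combinatorics, and hence the exact appearance of $\rho$, is forced.
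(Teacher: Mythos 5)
Your strategy coincides with the paper's: encode $v_0$-reduced divisors as data $(d_0; x_1,\ldots,x_g)$, attach a lingering lattice path, characterize rank $\geq r$ by the path staying in an open Weyl chamber, and count steps to force $\rho \geq 0$ and $d_0 \leq r+\rho$. The final count is essentially the paper's: positivity of the last coordinate at the end forces at least $g-d+r$ steps in direction $e_{r-1}$, hence in each of the $r$ coordinate directions, and there are $g-d+d_0$ steps in direction $(-1,\ldots,-1)$; since a $v_0$-reduced divisor of rank $\geq r$ has $d_0 \geq r$, this gives $g \geq r(g-d+r)+(g-d+d_0)$, i.e.\ $\rho \geq 0$ and $d_0 \leq r+\rho$, and part (2) follows because the $v_0$-reduced representative maximizes the coefficient at $v_0$. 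But the step you defer as ``the main obstacle'' is exactly where the paper's content lies, and your sketch is missing both ingredients needed there. First, the quantifier over all effective $E$ of degree $r$ (uncountably many when $r>0$) must be tamed: the paper invokes Luo's theorem on rank-determining sets to restrict to $E$ supported on $\{v_0,\ldots,v_g\}$, which you never mention. Second, and more importantly for Theorem~\ref{thm:Main}, the direction actually used is ``rank $\geq r$ implies the path stays in the chamber,'' whose contrapositive requires \emph{constructing an explicit witness} $E$ of degree $r$ with $D-E$ not equivalent to an effective divisor. The paper gets this from the sharpness clause of Proposition~\ref{prop:Induction}: an induction on $n$ tracking the $v_n$-reduced form of $D-E_n$, showing its coefficient at $v_n$ is at least $p_n(j)$ and can be made exactly $p_n(j)$ by choosing $r_n$ maximal so that the entries of $p_n$ at positions $j-r_n,\ldots,j$ are consecutive integers. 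Nothing in your proposal produces this inductive control or the witness, so ``the path exits the chamber'' does not yet imply rank $<r$.

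Several concrete details are also wrong as stated. A $v_0$-reduced divisor on $\Gamma$ has \emph{at most} one chip on each cell $\gamma_i$, not exactly one; the empty cells are what produce the $(-1,\ldots,-1)$ steps, which your $\ZZ^{r+1}$ path model never clearly accounts for (and your total-degree bookkeeping, ``remaining degree accumulated at $v_0$,'' would make $d_0$ negative for $d\leq 2g-2$). You also have the dichotomy reversed: in the paper the \emph{special} residues $x_i \equiv (p_{i-1}(j)+1)m_i \bmod (\ell_i+m_i)$ give the determined coordinate steps $e_j$, while a chip in generic position causes a lingering step (as does a special residue whose step would leave the chamber). Genericity (Definition~\ref{defn:Generic}) is used precisely so that at most one index $j$ satisfies the congruence --- because the coordinates of $p_{i-1}$ are distinct positive integers bounded by $2g-2$ --- not that ``at most one chip can cross'' a vertex: the whole pile crosses. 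Finally, your claim that staying in the chamber forces $(r+1)(g-d+r)$ determined steps is asserted without the two separate inputs it rests on (the ordering of step counts among coordinate directions, guaranteed by the chamber condition on $p_{i-1}+e_j$, and the bound $d_0\geq r$), so even the counting stage of your argument is incomplete as written.
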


\noindent The existence of graphs with no special divisors when $\rho$ is negative was conjectured by Baker.  In particular, Theorem~\ref{thm:Main}(1) confirms Conjectures~3.9(2), 3.10(2), and 3.15 of \cite{Baker08}.  As a consequence of the theorem, we obtain the following criterion for a curve to be Brill-Noether general.

\begin{cor}  \label{cor:criterion}
Let $X$ be a curve of genus $g$ over a discretely valued field with a regular, strongly semistable model whose special fiber has dual graph $\Gamma$. Then $X$ is Brill-Noether general.
\end{cor}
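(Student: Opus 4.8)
The plan is to derive Corollary~\ref{cor:criterion} directly from Theorem~\ref{thm:Main} together with Baker's Specialization Lemma, which is the bridge between the combinatorics of the graph $\Gamma$ and the geometry of the curve $X$. Recall that the Specialization Lemma tells us that for any divisor $D$ on $X$, its specialization to the dual graph $\Gamma$ of the special fiber satisfies $\dim |D| \le \operatorname{rank}(\tau(D))$, where $\tau$ denotes specialization. The key reformulation I would make at the outset is that $X$ being Brill-Noether general amounts to two assertions about $W^r_d(X)$: that it is empty when $\rho < 0$, and that it has dimension at most $\min\{\rho,g\}$ when $\rho \ge 0$ (the lower bound and nonemptiness being the easier direction already granted in the introduction). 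So the entire task is to transport the nonexistence statements of Theorem~\ref{thm:Main} from $\Gamma$ to $X$.

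First I would handle case (1), where $\rho$ is negative. Suppose for contradiction that $W^r_d(X)$ is nonempty, so $X$ carries a divisor $D$ of degree $d$ with $\dim|D| \ge r$. By the Specialization Lemma, the specialization $\tau(D)$ is a divisor of degree $d$ on $\Gamma$ whose rank is at least $r$. Since $\Gamma$ has generic edge lengths, Theorem~\ref{thm:Main}(1) says $\Gamma$ has no effective divisor of degree $d$ and rank $r$; after replacing $\tau(D)$ by a linearly equivalent effective representative (which exists because a divisor of nonnegative rank moves in an effective linear system) we reach a contradiction. Hence $W^r_d(X) = \emptyset$, which is exactly the required conclusion when $\rho < 0$.

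Next I would treat case (2), where $\rho \ge 0$, and here the goal is the \emph{upper bound} $\dim W^r_d(X) \le \rho$ (combined with the general lower bound this yields dimension exactly $\min\{\rho, g\}$). The strategy, exactly as flagged in the introduction, is to use a divisor of degree $r+\rho+1$ that is not contained in any effective divisor of degree $d$ and rank $r$. Concretely, consider the divisor $(r+\rho+1)\,v_0$ on $\Gamma$. Theorem~\ref{thm:Main}(2) asserts that no effective divisor of degree $d$ and rank $r$ on $\Gamma$ contains $(r+\rho+1)\,v_0$. Lifting via the model, the point $v_0$ corresponds to a component (or a chosen closed point specializing to that vertex) on $X$, giving a point $p \in X$; imposing that an effective divisor of degree $d$ and rank $r$ on $X$ contain $(r+\rho+1)\,p$ is $(r+\rho+1)$ linear conditions, cutting $W^r_d(X)$ down by the expected codimension. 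The specialization statement guarantees that the locus of such divisors on $X$ specializes into the empty locus on $\Gamma$, forcing it to be empty on $X$ as well. Since a general point imposes independent conditions and the generic fiber of the incidence correspondence over $W^r_d(X)$ has dimension at least $\dim W^r_d(X) - (r+\rho+1) + \dim|D|_{\mathrm{gen}}$, the emptiness of the constrained locus bounds $\dim W^r_d(X)$ above by $\rho$.

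The main obstacle, and the step requiring the most care, is case (2): translating the purely combinatorial non-containment statement on $\Gamma$ into a genuine dimension bound on $W^r_d(X)$. One must verify that imposing the base-point condition ``$D \ge (r+\rho+1)\,p$'' on $X$ is compatible with specialization --- i.e., that a positive-dimensional family of special divisors on $X$ would necessarily produce, after specialization, an effective divisor of degree $d$ and rank $r$ on $\Gamma$ containing $(r+\rho+1)\,v_0$, contradicting Theorem~\ref{thm:Main}(2). This requires a careful incidence-variety argument showing that if $\dim W^r_d(X) > \rho$, then a suitable member of the family can be forced to acquire a base point of multiplicity $r+\rho+1$ at $p$ (for instance by choosing a divisor in $W^r_d(X)$ through $p$ with high order of vanishing, using the fact that $\dim|D| \ge r$), and then specializing that member. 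Once this dictionary between base-point conditions upstairs and downstairs is set up cleanly, the bound follows formally, and the two cases together establish that $X$ is Brill-Noether general.
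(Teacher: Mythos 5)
Your overall route is the same as the paper's: Corollary~\ref{cor:criterion} is deduced by combining Theorem~\ref{thm:Main} with the Specialization Lemma, exactly as sketched in the paper's introduction, and your handling of the case $\rho<0$ is correct as it stands. (One harmless point of wording there: Theorem~\ref{thm:Main}(1) literally speaks of rank $r$, while after specializing you only know $r(\tau_*(D))\geq r$; but the proof of Theorem~\ref{thm:Main} via Theorem~\ref{thm:Technical} excludes all divisors of degree $d$ and rank \emph{at least} $r$, so this is fine. Likewise your choice of $p$ should be a point of $X(\overline K)$ specializing to a smooth point of the component corresponding to $v_0$, so that $\tau(p)=v_0$.)

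The genuine gap is in your case (2), and it is precisely at the step carrying all the content of the dimension bound. The assertions that containing $(r+\rho+1)p$ is ``$(r+\rho+1)$ linear conditions, cutting $W^r_d(X)$ down by the expected codimension'' and that ``a general point imposes independent conditions'' run in the wrong direction, and the incidence count you propose actually fails: over a fixed class $[D]\in W^r_d(X)$ with $\dim|D|=r$, the fiber $\{E\in|D| : E\geq (r+\rho+1)p\}$ is a linear section of $\mathbb{P}^r$ by $r+\rho+1$ hyperplanes, of expected dimension $-\rho-1<0$, so it is typically empty; if the conditions really were independent, the constrained locus would be empty \emph{regardless} of $\dim W^r_d(X)$, which is the opposite of what you need. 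What is needed (and what the paper treats as a known classical fact, cf.\ Section~3 of \cite{Baker08}) is that each base point \emph{beyond the rank} cuts the dimension by at most one \emph{while preserving nonemptiness}: assuming $\dim W^r_d(X)\geq \rho+1$, first subtract $r$ copies of $p$ for free using $r(D)\geq r$, obtaining a family of effective divisor classes of dimension at least $\rho+1$; then impose ``contains $p$'' one point at a time, $\rho+1$ times. Each such step is a divisorial condition on the relevant symmetric product, and nonemptiness is preserved because any positive-dimensional family of effective divisors has a member whose support contains any prescribed point of $X$ (project the incidence set in $X\times X^{(m)}$, whose fibers over the family are finite, down to $X$). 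This produces $[D]\in W^r_d(X)$ with an effective representative $E\geq (r+\rho+1)p$, whence $\tau_*(E)$ is an effective divisor of degree $d$ and rank at least $r$ on $\Gamma$ containing $(r+\rho+1)v_0$, contradicting Theorem~\ref{thm:Main}(2). Your final paragraph names this as ``a careful incidence-variety argument'' but does not supply it, and the heuristic you offer in its place (independence of conditions, fiber dimension over $W^r_d(X)$) cannot be repaired into that argument; as written, case (2) restates the goal rather than proving it.
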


\noindent Such curves exist over any complete, discretely valued field \cite[Appendix~B]{Baker08}, and the existence of Brill-Noether general curves over an arbitrary algebraically closed field follows easily.  See Section~\ref{sec:classical}.

While our primary interest is the Brill-Noether Theorem, and the tropical criterion for a curve to be Brill-Noether general, we also prove tropical analogues of the dimension part of the Brill-Noether Theorem and an enumerative formula when $\rho$ is zero.  We write $W^r_d(\Gamma)$ for the subset of the real torus $\Pic_d(\Gamma)$ parametrizing divisor classes of degree $d$ and rank at least $r$.

\begin{thm}  \label{thm:dimension}
If $\rho$ is nonnegative, then the dimension of $W^r_d(\Gamma)$ is $\min\{\rho, g\}$.
\end{thm}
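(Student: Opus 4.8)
The plan is to prove the two bounds $\dim W^r_d(\Gamma) \le \min\{\rho,g\}$ and $\dim W^r_d(\Gamma) \ge \min\{\rho,g\}$ separately, after disposing of an easy range. If $\rho \ge g$, equivalently $d \ge g+r$, then the Tropical Riemann--Roch Theorem shows that every class of degree $d$ has rank at least $d-g \ge r$, so $W^r_d(\Gamma) = \Pic^d(\Gamma)$ and its dimension is $g = \min\{\rho,g\}$. I may therefore assume $\rho < g$, so that the claim reduces to $\dim W^r_d(\Gamma) = \rho$. Throughout I would work with $v_0$-reduced divisors in the sense of Baker--Norine \cite{BakerNorine07}: every class has a unique $v_0$-reduced representative, the passage to this representative is piecewise linear and dimension preserving on $\Pic^d(\Gamma)$, and the rank of a class can be read off combinatorially from this representative.

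For the upper bound I would reuse the description of reduced divisors developed in the proof of Theorem~\ref{thm:Main}. On the chain of loops a $v_0$-reduced divisor records, for each loop, either a single chip in the interior of one of its two edges --- a free real coordinate --- or chips frozen at a vertex, with the remaining chips accumulated at $v_0$. This realizes $W^r_d(\Gamma)$ as a finite union of polyhedral cells, the dimension of each cell being the number of loops that carry a free interior chip. The goal then becomes the combinatorial claim that the rank-$\ge r$ condition forces at least $g-\rho$ of the loops to be frozen, so that every cell has dimension at most $\rho$. Theorem~\ref{thm:Main}(2) supplies the crucial extremal input: rank $\ge r$ already forces the $v_0$-multiplicity of the reduced representative to be at least $r$, while part~(2) forbids it from reaching $r+\rho+1$, so this multiplicity is confined to $[r,r+\rho]$. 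Organizing the argument as a filtration, with $U_k \subseteq W^r_d(\Gamma)$ the sublocus whose reduced representative has $v_0$-multiplicity at least $k$, one has $U_r = W^r_d(\Gamma)$ and $U_{r+\rho+1} = \emptyset$; if each inclusion $U_{k+1} \subseteq U_k$ drops the dimension by at most one, then $\dim W^r_d(\Gamma) \le \rho$ follows at once. I expect this one-step estimate --- equivalently, the freezing of $g-\rho$ coordinates --- to be the main obstacle, since it is exactly where the lattice-path bookkeeping behind Theorem~\ref{thm:Main} must be upgraded from a statement about emptiness to a statement about dimension.

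For the lower bound I would exhibit a single cell of dimension exactly $\rho$ all of whose members have rank at least $r$, namely the one corresponding to the lattice path that lingers as rarely as the rank-$\ge r$ confinement condition permits; its general member is a $v_0$-reduced divisor with exactly $\rho$ free interior chips and rank exactly $r$, producing a $\rho$-dimensional family inside $W^r_d(\Gamma)$. A softer alternative is to descend the classical bound from curves: choosing a curve $X$ over a discretely valued field with dual graph $\Gamma$, the Kempf--Kleiman--Laksov estimate gives $\dim W^r_d(X) \ge \min\{\rho,g\}$ in every characteristic, and the specialization map $\Pic^d(X) \to \Pic^d(\Gamma)$ carries $W^r_d(X)$ into $W^r_d(\Gamma)$ because rank only increases under specialization; being a tropicalization, it does not decrease the dimension of a subvariety, so the bound descends to $\Gamma$. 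Combining the two bounds yields $\dim W^r_d(\Gamma) = \min\{\rho,g\}$.
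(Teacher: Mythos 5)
Your cell decomposition of $W^r_d(\Gamma)$ into loci of $v_0$-reduced divisors, with cell dimension equal to the number of loops carrying a free interior chip, is exactly the paper's framework, and your primary lower-bound argument (exhibit one cell whose lattice path takes only the forced non-lingering steps and lingers the remaining $\min\{\rho,g\}$ times) is the paper's argument. But your upper bound has a genuine gap, and it sits exactly where you flag it. The confinement $d_0 \in [r, r+\rho]$ that you extract from Theorem~\ref{thm:Main}(2) controls only the number of \emph{empty} loops: the reduced divisor misses exactly $g-d+d_0$ loops, so this input alone bounds the number of free loops by $d-d_0 \le d-r$, which is far weaker than $\rho$. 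The missing — and essential — input is the count of loops frozen in the $r$ coordinate directions: by Theorem~\ref{thm:Technical}, rank at least $r$ forces the lingering lattice path to stay in the open Weyl chamber $\cC$, so $p_g(r-1)>0$ forces at least $g-d+r$ steps in direction $e_{r-1}$, and the built-in monotonicity (steps in direction $e_j$ never outnumber steps in direction $e_{j-1}$) then forces at least $g-d+r$ steps in \emph{every} coordinate direction. Only this, combined with $d_0 \ge r$, yields at most $g-(r+1)(g-d+r) = \rho$ lingering steps per path, which is the paper's one-line upper bound. Your filtration $U_r \supseteq \cdots \supseteq U_{r+\rho+1} = \emptyset$ does not supply a substitute: the one-step estimate $\dim U_k \le \dim U_{k+1} + 1$ has no independent proof mechanism in your proposal (there is no evident rank-preserving construction carrying $U_k$ into $U_{k+1}$ up to a one-parameter family), and when one does verify it — the locus with $d_0 = k$ exactly has dimension at most $\rho - (k-r)$ — the verification is the same lattice-path count, at which point the filtration is superfluous. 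So the crux of the theorem is left as an acknowledged ``main obstacle'' rather than proved.

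Two smaller points. First, your phrase ``lingers as rarely as the rank-$\ge r$ confinement condition permits'' should say lingers as \emph{often} as possible; your subsequent ``exactly $\rho$ free interior chips'' is the correct statement, and one must also check that such a maximally lingering path can be kept inside $\cC$, which the paper asserts as easy to see. Second, your alternative lower bound via specialization is not self-contained in this paper's framework: the claim that $\tau_* : \Pic_d(X_{\overline K}) \to \Pic_d(\Gamma)$ does not decrease the dimension of $W^r_d(X)$ is a nontrivial theorem about tropicalizations of subvarieties of totally degenerate abelian varieties (Gubler's dimension theorem, plus the identification of the specialization map on Picard groups with that tropicalization), not a formal property one may invoke for free; the Specialization Lemma as stated here controls ranks pointwise but says nothing about dimensions of images. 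Since your primary route for the lower bound coincides with the paper's, this is only a caution about the proposed alternative.
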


\noindent This points toward a potentially interesting Brill-Noether theory entirely within tropical geometry, and it is natural to wonder whether these results can be extended to a larger class of graphs.

When $\rho$ is zero and $X$ is general, Griffiths and Harris show that $W^r_d$ consists of finitely many reduced points, and the formula of Kempf, Kleiman, and Laksov says that the number of points is exactly
\[
\lambda = g! \prod_{i=0}^r \frac{i!}{(g-d+r+i)!}.
\]
This integer $\lambda$ has many interpretations.  It is the $(g-d+r)$th Catalan number of dimension $r$ \cite[p.~133]{MacMahon60}.  By the hook-length formula, it counts standard tableaux on the $(r+1) \times (g-d+r)$ rectangle \cite[Exercise~9, p.~54]{Fulton97}.  It is also the degree of the Grassmannian of $r$-planes in $\P^{g-d+2r}$ in its Pl\"ucker embedding, and hence counts $r$-planes in $\P^{g-d+2r}$ meeting $g$ general $g-d+r-1$ planes \cite[Lecture~19]{Harris92}.  We prove that it also counts divisor classes of degree $d$ and rank $r$ on $\Gamma$, with an explicit bijection to tableaux.

\begin{thm} \label{thm:Count}
If $\rho$ is zero, then there are exactly $\lambda$ distinct divisor classes of degree $d$ and rank $r$ on $\Gamma$.
\end{thm}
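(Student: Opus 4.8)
The plan is to identify divisor classes with lattice paths and those paths with standard Young tableaux. Since every divisor class on $\Gamma$ has a unique $v_0$-reduced representative, produced by Dhar's burning algorithm, counting classes of degree $d$ and rank $r$ is the same as counting $v_0$-reduced divisors of degree $d$ and rank $r$. The structural heart of the argument, which I would establish earlier and invoke here, is an explicit normal form for such reduced divisors on a generic chain of loops: a class is encoded by discrete data recording, loop by loop, where its chips sit, and for generic edge lengths distinct data give inequivalent classes. To this data I attach a lattice path $\mathbf{p}_0, \mathbf{p}_1, \ldots, \mathbf{p}_g$ in $\ZZ^{r+1}$ with $\mathbf{p}_0 = (0, 1, \ldots, r)$, where the increment $\mathbf{p}_i - \mathbf{p}_{i-1}$ is read off from the behavior of the divisor on the $i$-th loop; at each loop the path either lingers or takes one of $r+1$ admissible elementary steps.

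The key lemma, and the main obstacle, is a rank criterion phrased in terms of this path: a degree-$d$ divisor has rank at least $r$ if and only if every $\mathbf{p}_i$ lies in the open region $\{y_0 < y_1 < \cdots < y_r\}$, that is, the coordinates stay distinct and increasing. Proving this requires a loop-by-loop analysis of Dhar's algorithm and of how chip-firing transports chips across the cut vertices, using the genericity of the edge lengths to exclude accidental equivalences; the lingering steps are precisely the places where the divisor carries slack, and their number governs the dimension of the resulting family, in agreement with Theorem~\ref{thm:dimension}.

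Now specialize to $\rho = 0$. Here rank at least $r$ is the same as rank exactly $r$, since the value of $\rho$ attached to rank $r+1$ and degree $d$ is negative and Theorem~\ref{thm:Main}(1) then forbids rank $r+1$. By Theorem~\ref{thm:dimension} the locus $W^r_d(\Gamma)$ is zero-dimensional, so there can be no lingering step: a lingering step would let a chip slide along part of a loop without changing the rank, producing a positive-dimensional family of rank-$r$ classes. Hence every admissible path consists of exactly $g$ genuine steps that remain in the chamber, and a bookkeeping with the degree pins the terminal point to $\mathbf{p}_g = \mathbf{p}_0 + (g - d + r)(1, \ldots, 1)$. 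Recording at each step which coordinate is advanced then yields a word in which each of the symbols $0, \ldots, r$ occurs exactly $g - d + r$ times and whose partial counts never leave the chamber --- exactly the reading word of a standard Young tableau on the $(r+1) \times (g - d + r)$ rectangle. This is the promised explicit bijection.

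It remains to count standard tableaux on the $(r+1)\times(g-d+r)$ rectangle, which the hook-length formula evaluates as
\[
g! \prod_{i=0}^{r} \frac{i!}{(g - d + r + i)!} = \lambda .
\]
Beyond the rank criterion, the only point needing genuine care is that the normal form is a true bijection for generic lengths: no two distinct path data give linearly equivalent divisors, and every rank-$r$ class arises from some path. Both rest on the genericity hypothesis of Definition~\ref{defn:Generic}, which prevents the coincidences among edge lengths that would otherwise merge or omit classes.
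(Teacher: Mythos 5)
Your proposal is correct and follows essentially the same route as the paper: reduce to the unique $v_0$-reduced representative, invoke the lattice-path rank criterion (the paper's Theorem~\ref{thm:Technical}), observe that $\rho=0$ forces exactly $g-d+r$ steps of each of the $r+1$ types with no lingering steps, and biject the resulting chamber-confined paths with standard tableaux on the $(r+1)\times(g-d+r)$ rectangle counted by the hook-length formula. The only differences are cosmetic: you encode the path in $\ZZ^{r+1}$ with strictly increasing coordinates rather than the paper's $\ZZ^r$ path with a diagonal step $(-1,\ldots,-1)$ confined to $\cC$ (the two are equivalent after quotienting by the diagonal), and you rule out lingering steps via the zero-dimensionality in Theorem~\ref{thm:dimension} rather than directly from the step-count identity $g=(r+1)(g-d+r)$, which is the same counting one layer removed.
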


\noindent This exact equality is somewhat surprising; enumerative formulas in tropical geometry often require counts with multiplicities.  Here it seems that every divisor should be counted with multiplicity one.

\begin{conj} \label{conj:lifting}
Let $X$ be a smooth projective curve of genus $g$ over a discretely valued field, for which the special fiber of a strongly semistable regular model has dual graph $\Gamma$.  Then every divisor of degree $d$ and rank $r$ on $\Gamma$ lifts to a divisor of degree $d$ and rank $r$ on $X$.  Furthermore, if $\rho$ is zero, then this lift is unique.
\end{conj}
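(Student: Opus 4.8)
The plan is to study the specialization map $\tau$ from degree-$d$ divisor classes on $X$ to those on $\Gamma$, which is determined by the regular model $\cX/R$, and to show that it restricts to a bijection onto $W^r_d(\Gamma)$ when $\rho = 0$ and a surjection in general. Baker's Specialization Lemma already supplies half of what we need: for any $\tilde D$ on $X$ one has $r_\Gamma(\tau(\tilde D)) \geq r_X(\tilde D)$, so specialization cannot decrease rank. The content of the conjecture is the reverse, lifting direction, which the Specialization Lemma does not provide. I would first record that $\tau$ carries $W^r_d(X)$ into $W^r_d(\Gamma)$, since a class of rank at least $r$ specializes to a class of rank at least $r$; when $\rho = 0$, Theorem~\ref{thm:Main}(1) applied to the pair $(r+1,d)$ (for which the Brill-Noether number is negative) shows there are no classes of degree $d$ and rank $r+1$ on $\Gamma$, so every class in the image has rank exactly $r$.

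For the case $\rho = 0$ the strategy is a counting argument. Since $X$ is Brill-Noether general by Corollary~\ref{cor:criterion}, the scheme $W^r_d(X)$ is zero-dimensional and reduced, and the Kempf-Kleiman-Laksov formula shows it consists of exactly $\lambda$ points; by Theorem~\ref{thm:Count} the target $W^r_d(\Gamma)$ also has exactly $\lambda$ elements. Thus $\tau \colon W^r_d(X) \to W^r_d(\Gamma)$ is a map between finite sets of equal cardinality, and it suffices to prove it is injective, equivalently surjective. Injectivity is where the real difficulty lies: distinct classes on $X$ generically specialize to the same class on $\Gamma$, because the reduction map $\Pic_d(X) \to \Pic_d(\Gamma)$ has positive-dimensional fibers, so one must show that $W^r_d(X)$ meets each such fiber at most once. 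I expect this to follow from a transversality computation: using the reducedness of $W^r_d(X)$ together with the explicit description of the $\lambda$ classes on $\Gamma$ coming from the proof of Theorem~\ref{thm:Count}, one shows that the induced map of deformation spaces is unramified at each point, so that $\tau$, being a map of $\lambda$-point sets, is bijective.

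For the existence statement with $\rho$ arbitrary and nonnegative, I would replace counting by dimension. Here $W^r_d(X)$ is nonempty of dimension $\min\{\rho,g\}$ by Brill-Noether, and $W^r_d(\Gamma)$ has the same dimension by Theorem~\ref{thm:dimension}, so the goal becomes showing that the tropicalization of $W^r_d(X)$ fills out all of $W^r_d(\Gamma)$. One route is to construct lifts directly: for each divisor class on $\Gamma$, produce a limit linear series on the special fiber $\cX_0$ realizing the prescribed combinatorial behavior along $\Gamma$, and then invoke a smoothing theorem to deform it to a genuine $g^r_d$ on $X$ specializing back to the given class.

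The main obstacle, and the reason this remains a conjecture, is precisely this smoothing step. Since $\Gamma$ is not of compact type, the classical Eisenbud-Harris theory of limit linear series does not apply, and one needs a regeneration result valid for totally degenerate curves, controlling the vanishing sequences at every node simultaneously. Establishing that the explicit $v_0$-reduced divisors underlying Theorems~\ref{thm:Count} and~\ref{thm:dimension} assemble into smoothable limit series — and, in the $\rho = 0$ case, that the smoothing is unique — is the crux of the problem.
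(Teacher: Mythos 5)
You should first be aware that the statement you were given is, in the paper, precisely that: a conjecture. The paper offers no proof of it, so there is no argument of the authors to compare yours against; the honest assessment is whether your proposal closes the gap, and it does not (as you yourself concede in your final paragraph). Several of your preliminary reductions are sound and match what the paper's results do give: the Specialization Lemma shows $\tau_*$ cannot decrease rank, so it carries $W^r_d(X)$ into $W^r_d(\Gamma)$; and when $\rho = 0$ your observation that Theorem~\ref{thm:Main}(1) applied to $(r+1,d)$ rules out rank $r+1$ on $\Gamma$ is correct, since then $g-d+r = g/(r+1)$ and the Brill--Noether number for $(r+1,d)$ is $-g/(r+1)-(r+2) < 0$, so image classes have rank exactly $r$. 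Theorem~\ref{thm:Count} indeed gives exactly $\lambda$ classes downstairs.

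The gaps are concrete. First, your counting argument needs $W^r_d(X)$ to consist of $\lambda$ \emph{distinct} points, but Corollary~\ref{cor:criterion} only gives Brill--Noether generality, which in this paper is a statement about the dimension of $W^r_d$, not its scheme structure; reducedness at $\rho=0$ requires a Petri-type theorem (Gieseker), which the paper explicitly lists among the refinements \emph{not} established by these methods. Without it, the Kempf--Kleiman--Laksov formula counts points only with multiplicity, the two finite sets need not have equal cardinality, and the equivalence of injectivity and surjectivity collapses. Second, your injectivity step --- ``the induced map of deformation spaces is unramified at each point'' --- is a restatement of the uniqueness assertion, not an argument: $\tau_*\colon \Pic_d(X_{\overline K}) \to \Pic_d(\Gamma)$ is a specialization map onto an extension of a real torus, its fibers are not algebraic subvarieties to which a transversality or deformation-theoretic computation in the paper's framework could apply, and nothing in the paper controls how $W^r_d(X)$ sits relative to these fibers. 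Third, for general $\rho \geq 0$, equality of dimensions (Theorem~\ref{thm:dimension} against the classical Brill--Noether theorem) does not force the tropicalization of $W^r_d(X)$ to fill out the polyhedral set $W^r_d(\Gamma)$; and the limit-linear-series smoothing you would substitute is exactly the missing ingredient, since $\Gamma$ arises from a totally degenerate special fiber, outside the compact-type setting of Eisenbud--Harris. So your proposal is a reasonable map of the obstacles, but it proves nothing beyond the Specialization Lemma direction --- which is consistent with the authors leaving the statement open.
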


\begin{rem}
Important refinements of the Brill-Noether Theorem include Gieseker's proof of the Petri Theorem, which implies that $W^r_d(X)$ is smooth away from $W^{r+1}_d(X)$, for a general curve.  Fulton and Lazarsfeld then applied a general connectedness theorem to prove that $W^r_d(X)$ is irreducible \cite{FultonLazarsfeld81}.  It should be interesting to see if tropical methods may be applicable to these properties of $W^r_d$ as well.  Furthermore, there is a close analogy between Brill-Noether loci and degeneracy loci studied by Farkas, where the natural map from $\Sym^n(H^0(C, L))$ to $H^0(C, L^{\otimes n})$ drops rank, and the Strong Maximal Rank Conjecture predicts that these loci should have the naive expected dimension for a general curve \cite[Conjecture~5.4]{AproduFarkas11}.  Hyperplane sections of $K3$ surfaces are not sufficiently general for this conjecture, but it is tempting to hope that tropical methods may be useful instead.
\end{rem}

\begin{rem}
When the genus $g$ is small, the moduli space of curves is unirational over $\QQ$.  In these cases, rational points are dense in the moduli space and hence there exist Brill-Noether general curves defined over $\QQ$.  For large $g$, the moduli space is of general type, and Lang's Conjectures predict that rational points should be sparse.  In these cases, it is unclear whether there exists a Brill-Noether general curve defined over $\QQ$.  The rational numbers carry many discrete valuations, one for each prime, so the criterion given by Corollary~\ref{cor:criterion} could potentially be used to produce such curves by explicit computational methods; see Section~6 of \cite{BPR11} for details on computational tests for faithful tropical representations of minimal skeletons.  To the best of our knowledge, there are no known examples of Brill-Noether general curves defined over $\QQ$ when the moduli space is not unirational.
\end{rem}

\begin{rem}
We briefly mention a few related results that have appeared since this paper was written.  The special case of Conjecture~\ref{conj:lifting} where $r = 1$ and $\rho = 0$ has been proved by Coppens and the first author \cite[Theorem~2.3]{CoolsCoppens10}.  The two remaining open conjectures from \cite{Baker08}, on the existence of special divisors on complete graphs when $\rho$ is nonnegative, have now been proved by Caporaso \cite[Theorem~6.3]{Caporaso11b}.  The idea of using Theorem~\ref{thm:Main}(2) to prove the classical Brill-Noether Theorem has been developed into a theory of ``ranks" for tropical Brill-Noether loci, analogous to the Baker-Norine theory of ranks of divisors on graphs, by Lim, Potashnik, and the third author \cite{LPP11}.  They have also given examples of open subsets of the moduli space of metric graphs where the equality in Theorem~\ref{thm:dimension} does not hold.
\end{rem}

\noindent \textbf{Acknowledgments.}  This research was done in part during the special semester on Tropical Geometry at MSRI in Berkeley, and we are grateful for the hospitality and ideal working environment provided by this program.  We thank the participants of the chip-firing seminar at MSRI, including E.~Brugall\'e, E.~Cotterill, C.~Haase, E.~Katz, D.~Maclagan, G.~Musiker, J.~Yu, and I.~Zharkov, for many enlightening discussions, and M.~Baker, F.~Schroeter, and the referee for helpful comments on an earlier draft.

The work of FC is supported by a postdoctoral fellowship from the Research Foundation - Flanders. The work of JD was partially supported by MSRI and by a Vidi grant of the Netherlands Organisation for Scientific Research (NWO). The work of SP was partially supported by the Clay Mathematics Institute and NSF grant DMS 1068689. The work of ER was supported by a research grant from VPUE of Stanford University.

\section{Preliminaries}

We briefly recall the theory of divisor classes and ranks of divisors on (metric) graphs, following Baker and Norine \cite{BakerNorine07, Baker08}, to which we refer the reader for further details, references, and applications.

\subsection{Divisors, equivalence, and chip-firing}

Let $G$ be a finite, connected, undirected graph, with a positive
real number length assigned to each edge. For compatibility with
\cite{BakerNorine07}, we allow $G$ to have multiple edges but no loops.
Let $\Gamma$ be the associated metric graph, which is the compact
connected metric space obtained by identifying the edges of $G$ with
segments of the assigned lengths.  Such metric graphs are examples of
\emph{abstract tropical curves}, in the sense of \cite{GathmannKerber08}.
Let $g$ be the genus, or first Betti number, of $\Gamma$.

The group $\Div(\Gamma)$ is the free abelian group on the points of $\Gamma$, and elements of $\Div(\Gamma)$ are called \emph{divisors} on $\Gamma$.  The \emph{degree} of a divisor
\[
D = a_1 v_1 + \cdots + a_s v_s
\]
is the sum of the coefficients $\deg(D) = a_1 + \cdots + a_s$, and $D$ is \emph{effective} if each coefficient $a_i$ is nonnegative.

The subgroup of principal divisors are given by corner loci of piecewise linear functions, as follows.  Let $\psi$ be a continuous function on $\Gamma$, and suppose that there is a finite subdivision of $\Gamma$ such that $\psi$ is given by a linear function with integer slope on each edge of the subdivision.  Then, for each vertex $v$ of this subdivision, the order $\ord_v(\psi)$ is the sum of the incoming slopes of $\psi$ along the edges containing $v$, and the divisor of $\psi$ is
\[
\divisor(\psi) = \sum_v \ord_v(\psi) v.
\]
Two divisors $D$ and $D'$ are equivalent, and we write $D \sim D'$, if their difference $D - D'$ is equal to $\divisor(\psi)$ for some piecewise linear function $\psi$.

\begin{ex} \label{ex:Circle}
Let $\Gamma$ be a single loop, formed by two edges, of lengths $\ell$
and $m$ respectively, joining vertices $v$ and $v'$.  We label the points of $\Gamma$ by the interval
$[0,\ell + m)$ according their distance from $v$ in the counterclockwise
direction. In particular, $v$ is labeled $0$ and $v'$ is labeled
$m$. Let $D = kv + w$, where $w$ is either the zero divisor or the point of $\Gamma
\smallsetminus v$ labeled by $x \in (0, \ell + m)$, and assume $km$ is
not an integer multiple of $\ell + m$.  Then $D$ is linearly equivalent to
\[
D' =
\left \{
	\begin{array}{ll} (k-1)v' + w' & \mbox{ if $w$ is zero}, \\
	(k+1)v' & \mbox{ if $w$ is not zero and $x \equiv (k+1)m \mod \ell + m$}, \\
	kv' + w'' & \mbox{ otherwise}. \end{array}
\right.
\]
Here $w'$ and $w''$ are the points of $\Gamma \smallsetminus v'$ labeled by
$x' \equiv -(k-1)m$ and $x'' \equiv (x-km) \mod (\ell + m)$, respectively.
The equivalence can be seen easily from Dhar's burning algorithm with
base point $v'$, as presented in \cite{Luo11}.  It is also possible to
explicitly construct a piecewise linear function $\psi$ such that $D -
D' = \divisor (\psi)$, depending on the combinatorial configuration of
the points in question.  For instance, in the last case, if $x$ is greater than
$(k+1)m$, then $w''$ is at distance $km$ from $w$ on the segment
of length $\ell$, as shown.
\begin{center}
\includegraphics[scale=.6]{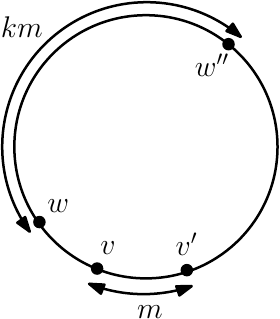} \hspace{2cm}  \includegraphics[scale=.6]{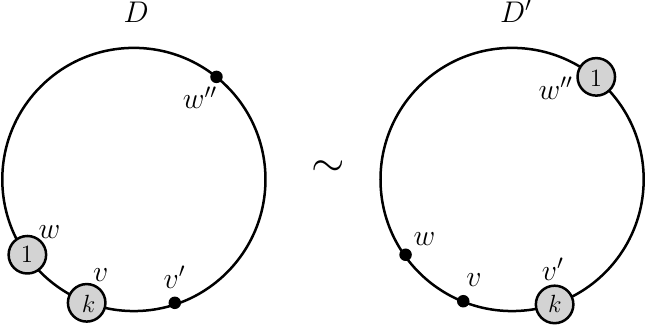}
\end{center}
A piecewise linear function $\psi$ such that $D - D' = \divisor(\psi)$ is then given by the constant functions $k$ and zero on
the segments $[v,w]$ and $[v',w'']$, respectively, and by linear functions
of slopes $-k$ and $-1$ on the segments $[v,v']$ and $[w,w'']$, respectively.
\end{ex}

The group of equivalence classes of divisors
\[
\Pic(\Gamma) = \Div(\Gamma) /  \sim
\]
is an extension of a real torus of dimension $g$ by $\ZZ$ \cite{MikhalkinZharkov08, BakerFaber11}.  The projection to $\ZZ$ takes the class of a divisor $D$ to its degree, which is well-defined because the degree of the divisor of any piecewise linear function is zero.  We write $\Pic_d(\Gamma)$ for the space of divisor classes of degree $d$ on $\Gamma$.

\begin{rem}
In combinatorics it is customary to refer to divisors on $\Gamma$ (especially those supported on the vertices of $G$) as \emph{chip configurations}.  The equivalence relation $\sim$ is generated by certain elementary equivalences called \emph{chip-firing moves}.  One imagines that $D = a_1 v_1 + \cdots + a_s v_s$ is represented by a stack of $a_i$ chips at each $v_i$ (or $|a_i|$ antichips, if $a_i$ is negative).  If $D$ is equivalent to $D'$ then $D - D' = \divisor (\psi)$ for some piecewise linear function $\psi$, and one imagines a path $\psi_t$ from zero to $\psi$ in the space of piecewise linear functions on $\Gamma$.  Then
\[
D_t = D - \divisor(\psi_t)
\]
is a path from $D$ to $D'$ in the space of divisors equivalent to $D$.  The laws of the game allow chips and antichips to collide and annihilate each other, and pairs of chips and antichips may sometimes be created from the ether.  However, if $D$ and $D'$ are both effective, Dhar's algorithm \cite{Dhar90} ensures that $\{ \psi_t \}$ can be chosen so that $D_t$ is effective for all $t$.  In other words, two effective divisors are linearly equivalent if and only if one can continuously move the chips from one configuration to the other by a sequence of allowable chip-firing moves.  The survey article \cite{HLMPPW08} is an excellent entry point to the vast literature on the combinatorics of chip-firing on graphs, and \cite{GathmannKerber08} and \cite{HladkyKralNorine08} include helpful explanations on the generalization of chip-firing to tropical curves and metric graphs.
\end{rem}

\subsection{Ranks of divisors}

Let $D$ be a divisor on $\Gamma$.  If $D$ is not equivalent to an effective divisor then the \emph{rank} of $D$, written $r(D)$, is defined to be $-1$.  Otherwise, $r(D)$ is the largest nonnegative integer $r$ such $D - E$ is equivalent to an effective divisor for every effective divisor $E$ of degree $r$ on $\Gamma$.

\begin{rem}
This notion of rank is a natural analogue of the dimension of the complete
linear system of a divisor on an algebraic curve.  A divisor $D$ on a
smooth projective curve $X$ moves in a linear series of dimension at
least $r$ if and only if $D - E$ is linearly equivalent to an effective
divisor for every effective divisor $E$ of degree $r$ on $X$.
\end{rem}

\begin{rem}
The set of effective divisors on $\Gamma$ that are equivalent to $D$ is naturally identified with the underlying set of a finite polyhedral complex \cite{HMY09}.  The dimension of this complex is bounded below by the rank of $D$, but is often larger.
\end{rem}

The canonical divisor $K$ on $\Gamma$ is defined as
\[
K = \sum_v (\deg v - 2) v,
\]
where the sum is over the vertices of $\Gamma$.  The degree of $K$ is $2g-2$, as can be checked by a computation of the topological Euler characteristic of $\Gamma$, and the rank of $K$ is $g -1$.  The latter is a special case of the following generalization to metric graphs of the Baker-Norine-Riemann-Roch Theorem for graphs.

\begin{TRR}[\cite{GathmannKerber08, MikhalkinZharkov08}]
Let $D$ be a divisor on $\Gamma$.  Then
\[
r(D) - r(K - D) = \deg(D) + 1 - g.
\]
\end{TRR}

\noindent This formula has many beautiful and useful applications.  For instance, any divisor $D$ of degree greater than $2g-2$ has rank exactly $\deg(D) -g$.

\subsection{Reduced divisors and Luo's Theorem}

Two fundamental tools for computing ranks of divisors on graphs are the existence and uniqueness of $v$-reduced divisors and Luo's Theorem on rank determining sets, which we now recall.

If we fix a basepoint $v$ on $\Gamma$, then each divisor $D$ on $\Gamma$ is equivalent to a unique \emph{$v$-reduced divisor}, denoted $D_0$ \cite[Theorem~10]{HladkyKralNorine08}.  This $v$-reduced divisor $D_0$ is characterized by two properties.  First, it is effective away from $v$, so $D_0 + k v$ is effective for $k$ sufficiently large.  Second, the points in $D_0$ are, roughly speaking, as close to $v$ as possible.  More precisely, the multiset of distances to $v$ of points in $D_0 + kv$ is lexicographically minimal among the multisets of distances to $v$ for all effective divisors equivalent to $D + kv$.  In particular, the coefficient of $v$ in $D_0$ is maximal among all divisors equivalent to $D$ and effective away from $v$, and hence $D$ is linearly equivalent to an effective divisor if and only if $D_0$ is effective.

\begin{rem}
The existence and uniqueness of $v$-reduced divisors on graphs is a natural analogue of the following existence and uniqueness property for divisors on algebraic curves.  If $D$ is a divisor on a smooth projective curve $X$ with a chosen basepoint $x$, then there is a unique divisor $D_0$ that is linearly equivalent to $D$ and effective away from $x$, and whose coefficient of $x$ is maximal among all such divisors.  If $D$ is effective, then $D_0$ is the zero locus of the section of $\mathcal O_X(D)$ with maximal order vanishing at $x$, which is unique up to scaling.
\end{rem}

\begin{ex} \label{ex:Reduced}
Let $\Gamma$ be the chain of loops shown in Figure~\ref{fig:Gamma},
so $v_n$ is the point of intersection between the $n$th loop and
the $(n+1)$th loop, for $1 \leq n \leq g-1$, and $v_0$ and $v_g$
are distinguished points on the first and last loops, respectively.
We characterize $v_n$-reduced divisors on $\Gamma$, as follows.
Let $\gamma'_i$ be the $i$th loop minus $v_{i}$, and let $\gamma_j$
be the $j$th loop minus $v_{j-1}$.  Then $\Gamma$ can be decomposed as
a disjoint union
\[
\Gamma  = \bigg( \bigsqcup_{i=1}^n \gamma'_i \bigg) \sqcup \ v_n \ \sqcup \bigg( \bigsqcup_{j=n+1}^g \gamma_j \bigg),
\]
as shown.

\bigskip
\begin{center}
\includegraphics{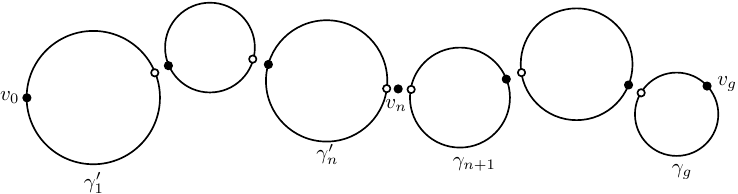}
\end{center}
\bigskip

\noindent A divisor $D$ on $\Gamma$ is $v_n$-reduced if and only if it is effective away from $v_n$ and each cell $\gamma'_i$ for $1 \leq i \leq n$, and $\gamma_j$ for $n+1 \leq j \leq g$, contains at most one point of $D$.  Indeed, if one of these cells contains more than one point of $D$ then they can be moved closer to $v_n$ by an equivalence similar to the one given in Example~\ref{ex:Circle} (with $\psi$ extended by a locally constant function on the complement of that cell), or by applying Dhar's burning algorithm. Conversely, if each of these cells contains at most one point of $D$, then the fact that $D$ is $v_n$-reduced can be checked by Dhar's burning algorithm.  See Section~2 of \cite{Luo11}.
\end{ex}

The existence and uniqueness of $v$-reduced divisors facilitate checking whether any given divisor is equivalent to an effective divisor.  However, to check if a divisor $D$ has rank at least $r$, in principle we must check whether $D - E$ is equivalent to an effective divisor for all divisors $E$ of rank $r$, of which there are uncountably many if $r$ is positive.  Luo has recently improved this situation by showing that there is a small, finite set of points in $\Gamma$ with the property that, for any $D$ and any $r$, it is enough to check for divisors $E$ whose support is contained in $A$.  Here, the \emph{support} of an effective divisor is the set of points that appear in it with nonzero coefficient.

\begin{LT}[\cite{Luo11}]
Let $A$ be a finite subset of $\Gamma$ such that the closure in $\Gamma$ of each connected component of $\Gamma \smallsetminus A$ is contractible.  Let $D$ be a divisor on $\Gamma$ and suppose that, for every effective divisor $E$ of degree $r$ whose support is contained in $A$, the difference $D - E$ is effective.  Then the rank of $D$ is at least $r$.
\end{LT}

\begin{rem}
Such a subset $A$ can always be chosen with size at most $g+1$.  Luo's Theorem is then a natural analogue of the following fact about divisors on curves.  Let $A = \{x_0 , \ldots, x_g \}$ be a set of distinct points on a smooth projective curve of genus $g$.  Then a divisor $D$ moves in a linear series of dimension at least $r$ if and only if $D - E$ is linearly equivalent to an effective divisor for every effective divisor $E$ with support in $A$.  For a proof, due to Varley, see Remark~3.13 of \cite{Luo11}.
\end{rem}

\begin{ex}
Let $\Gamma$ be the chain of loops shown in Figure~\ref{fig:Gamma}, and let $A = \{v_0, \ldots, v_g\}$.  Then the closure of each connected component of $\Gamma \smallsetminus A$ is either the top half or the bottom half of one of the loops, and hence is contractible.  In this case, Luo's Theorem says that a divisor $D$ on $\Gamma$ has rank at least $r$ if and only if $D - E$ is equivalent to an effective divisor for any effective divisor $E = r_0 v_0 + \cdots + r_g v_g$ of degree $r$.
\end{ex}

\subsection{Specialization}

We conclude this preliminary section with a review of Baker's Specialization Lemma, which relates dimensions of complete linear series on certain curves over discretely valued fields to ranks of divisors on graphs.

Let $K$ be a discretely valued field, with valuation ring $R$ and residue field $k$, and let $X$ be a smooth projective curve over $K$.  A \emph{strongly semistable regular model} of $X$ is a regular scheme $\fX$ over $\Spec R$ whose general fiber $\fX_K$ is isomorphic to $X$ and whose special fiber $\fX_k$ is a reduced union of geometrically irreducible smooth curves $X_0, \ldots, X_s$ that meet only at simple nodes defined over $k$.  The dual graph $G$ of the special fiber has vertices $v_0, \ldots, v_s$ corresponding to the irreducible components of $\fX_k$ and one edge joining $v_i$ to $v_j$ for each point of intersection in $X_i \cap X_j$.  Let $\Gamma$ be the associated metric graph, where each edge is assigned length 1.

Each point in $X(K)$ specializes to a smooth point in the special fiber.  We write $\tau: X(K) \rightarrow \Gamma$ for the induced map which takes a point $x$ to the vertex $v_i$ corresponding to the irreducible component of $\fX_k$ that contains the specialization of $x$.  This map $\tau$ is compatible with finite field extensions, as follows.  If $K'$ is a finite extension of $K$ then there is a unique relatively minimal strongly semistable regular model $\fX'$ of $X \times_K K'$ that dominates $\fX \times_R R'$.  Let $G'$ be the dual graph of the special fiber of $\fX'$, and let $\Gamma'$ be the associated metric graph in which each edge is assigned length $1/e$, where $e$ is the ramification index of $K'/K$.  Then $\Gamma'$ is naturally isomorphic to $\Gamma$, and the induced specialization maps $X(K') \rightarrow \Gamma$ for all finite extensions $K'/K$ together give a well-defined geometric specialization map
\[
\tau: X(\overline K) \rightarrow \Gamma.
\]
Furthermore, the induced map on free abelian groups $\tau_* : \Div (X_{\overline K}) \rightarrow \Div(\Gamma)$ respects linear equivalence, and hence descends to a degree preserving group homomorphism
\[
\tau_* : \Pic(X_{\overline K}) \rightarrow \Pic(\Gamma).
\]
See Section~2 of \cite{Baker08} for details, further references, and a proof of the following.

\begin{SL}
Let $D$ be a divisor on $X_{\overline K}$.  Then
\[
r(\tau_*(D)) \geq r(D),
\]
where $r(D)$ is the dimension of the complete linear system $|D|$ on $X_{\overline K}$.
\end{SL}

\noindent In particular, if $\Gamma$ has no divisors of degree $d$ and rank at least $r$, then the Brill-Noether locus $W^r_d$ in $\Pic_d(X)$ is empty.

\section{From tropical to classical Brill-Noether theory}  \label{sec:classical}

Here we explain how the classical Brill-Noether theorem, over $\CC$ or an arbitrary algebraically closed field, follows from Theorem~\ref{thm:Main}.  First, we deduce Corollary~1.2, which establishes the existence of Brill-Noether general curves over an arbitrary  complete, discretely valued field.

\begin{proof}[Proof of Corollary~\ref{cor:criterion}]  Suppose $X$ is a curve of genus $g$ over a discretely valued field $K$ with a regular semistable model whose special fiber has dual graph $\Gamma$.  By Theorem~\ref{thm:Main}(1), if $\rho$ is negative then $\Gamma$ has no divisors of degree $d$ and rank at least $r$.  Therefore, by Baker's Specialization Lemma, the Brill-Noether locus $W^r_d(X)$ is empty.  It remains to show that if $\rho$ is nonnegative then $\dim W^r_d(X)$ is at most $\rho$.

If $X$ has a divisor $D$ of degree $d$ with $r(D) \geq r$, then every effective divisor of degree $r$ on $X$ is contained in an effective divisor of degree $d$ and rank $r$ that is equivalent to $D$.  If the class of $D$ moves in a $1$-dimensional algebraic family in $W^r_d(X)$, then every effective divisor of degree $r+1$ on $X$ is contained in an effective divisor whose class lies in that family, and a straightforward induction on dimension shows that every effective divisor of degree $r + \dim W^r_d(X)$ is contained in an effective divisor whose class lies in $W^r_d(X)$.  Therefore, to prove that $\dim W^r_d(X)$ is at most $\rho$, it will suffice to produce an effective divisor of degree $r + \rho + 1$ that is not contained in any effective divisor of degree $d$ such that $r(D) \geq r$.

Now, choose a point $x$ in $X_{\overline K}$ specializing to $v_0$.  Then $(r + \rho +1)x$ specializes to $(r + \rho + 1)v_0$, which is not contained in any effective divisor of degree $d$ and rank at least $r$, by Theorem~\ref{thm:Main}(2).  By Baker's Specialization Lemma, it follows that $(r + \rho + 1)x$ is not contained in any effective divisor $D$ such that $r(D) \geq r$.
\end{proof}

\begin{rem}
The above proof uses the fact that $v_0$ is a vertex of $\Gamma$, and hence is in the image of $X_{\overline K}$.  These arguments can be extended to show that if $X'$ is a curve over a discretely valued field with a regular semistable model whose special fiber has dual graph $\Gamma'$ then every effective divisor of degree $r + \dim W^r_d(X')$ on $\Gamma'$  is contained in an effective divisor of degree $d$ and rank at least $r$.  See \cite{LPP11} for details.
\end{rem}

\begin{proof}[Brill-Noether Theorem over $\CC$]
The existence of Brill-Noether general curves over $\CC$ follows easily from Corollary~\ref{cor:criterion}.  The field of Laurent series $\QQ((t))$ is complete and discretely valued.  Therefore, Corollary~\ref{cor:criterion} and the existence theorem from Appendix B of \cite{Baker08} show that for each genus $g$ there is a Brill-Noether general curve $X$ of genus $g$ over $\QQ((t))$.  The algebraic closure of $\QQ((t))$ is isomorphic to $\CC$ as an abstract field, because any two uncountable algebraically closed fields of the same cardinality and characteristic are isomorphic.  Therefore, there is a (noncontinuous) embedding of $\QQ((t))$ as a subfield of $\CC$.  Fix such an embedding.  Then
\[
X_\CC = X \times_{\Spec \QQ((t))} \Spec \CC
\]
is a Brill-Noether general curve over $\CC$.
\end{proof}

The argument above is insufficient to prove the Brill-Noether Theorem over fields such as $\overline \FF_p$ that have no subfields with nontrivial valuations.  Nevertheless, the existence of Brill-Noether general curves over $\overline \FF_p$, or an arbitrary algebraically closed field, follows from Corollary~\ref{cor:criterion} and the existence of the moduli space of curves.

\begin{proof}[Brill-Noether Theorem over an algebraically closed field]
The coarse moduli space $M_g$ of smooth projective curves of genus $g$ is a scheme of finite type over $\Spec \ZZ$, and the locus $U$ of Brill-Noether general curves is Zariski open and hence is also a scheme of finite type over $\Spec \ZZ$.  By Corollary~\ref{cor:criterion}, the scheme $U$ has points over $\QQ((t))$ and $\FF_p((t))$ for all primes $p$, and hence surjects onto $\Spec \ZZ$.  Therefore, the fiber of $U$ over any prime field is a nonempty scheme of finite type, and hence $U$ has points over any algebraically closed field. 
\end{proof}

\section{Chip-firing on a generic chain of loops}

Let $\Gamma$ be a chain of $g$ loops, as pictured in Figure~\ref{fig:Gamma}.  The top and bottom segments of the $i$th loop connect the vertex $v_{i-1}$ to $v_i$ and have length $\ell_i$ and $m_i$, respectively.  In particular, the total length of the $i$th loop is $\ell_i + m_i$.

\begin{defn} \label{defn:Generic}
The graph $\Gamma$ is \emph{generic} if none of the ratios $\ell_i / m_i$ is equal to the ratio of two positive integers whose sum is less than or equal to $2g-2$.
\end{defn}

\noindent Some genericity condition on these lengths is necessary for nonexistence of special divisors.  For instance, if $\ell_i = m_i$ for all $i$ then $\Gamma$ is hyperelliptic \cite{BakerNorine09}, meaning that it has a divisor of degree two and rank one.  Our genericity condition is easily achieved with integer edge lengths.  For instance, one may take $\ell_i = 2g-2$ and $m_i = 1$ for all $i$.

\begin{notn}
Throughout the remainder of the paper, we assume that $\Gamma$ is generic, in the sense of Definition~\ref{defn:Generic}.  Our main results, stated in the introduction, are trivial when $g$ is zero or one and follow from the Tropical Riemann-Roch Theorem when $d$ is greater than $2g-2$, so we also assume that $g$ is at least two and $d$ is at most $2g-2$.
\end{notn}

Our main combinatorial tool in the proof of Theorems~\ref{thm:Main}, \ref{thm:dimension}, and \ref{thm:Count} is a  \emph{lingering lattice path} associated to each $v_0$-reduced divisor of degree $d$ on $\Gamma$, defined as follows.

\begin{defn}
A lingering lattice path $P$ in $\ZZ^r$ is a sequence $p_0, \ldots, p_g$ of points in $\ZZ^r$ such that each successive difference $p_i - p_{i-1}$ is either $(-1,\ldots,-1)$, a standard basis vector, or zero.
\end{defn}

\noindent When $p_i -p_{i-1}$ is zero, we say that the lattice path \emph{lingers} at the $i$th step.

Recall that a divisor $D$ on $\Gamma$
is $v_0$-reduced if and only if it is effective away from $v_0$ and each cell $\gamma_i$ in the decomposition given in Example~\ref{ex:Reduced}
contains at most one point of $D$.  We label such a point by its distance
from $v_{i-1}$ in the counterclockwise direction, so $v_i$ is labeled
by $m_i$.  This leads to a natural bijection
\[
\{ v_0\mbox{-reduced divisors on } \Gamma \} \longleftrightarrow \ZZ \times \RR/(\ell_1 + m_1) \times \cdots \times \RR/(\ell_g + m_g),
\]
taking a $v_0$-reduced divisor $D$ to the data $(d_0; x_1, \ldots, x_g)$, where $d_0$ is the coefficient of $v_0$ in $D$, and $x_i$ is the location of the unique point in $D$ on $\gamma_i$, if there is one, and zero otherwise.  The lingering lattice path associated to $D$ is defined in terms of this data as follows.  We label the coordinates of $\ZZ^r$ and $\RR^r$ from zero to $r-1$, and write $p_i(j)$ for the $j$th coordinate of $p_i$.  We write $\cC$ for the open Weyl chamber
\[
\cC = \{y \in \RR^r \ | \ y(0) > \cdots > y(r-1) > 0\}.
\]

\begin{defn}
Let $D$ be the $v_0$-reduced divisor of degree $d$ corresponding to $(d_0; x_1, \ldots, x_g)$.  Then the associated lingering lattice path $P$ in $\ZZ^r$ starts at $(d_0, d_0 - 1, \ldots, d_0 - r +1)$ with steps given by
\[
p_{i} - p_{i-1} =
	\left \{ \begin{array}{ll} (-1, \ldots, -1) & \mbox{if } x_i = 0; \\
	e_j & \mbox{if } x_i \equiv (p_{i-1}(j)+1)m_i \mod \ell_i + m_i, \mbox{ and}\\
	& \ \ \ p_{i-1} \mbox{ and } p_{i-1} + e_j \mbox{ are in } \cC; \\
	0 & \mbox{otherwise,} \end{array} \right.
\]
where $e_0,\ldots,e_{r-1}$ are the standard basis vectors in $\ZZ^r$.
\end{defn}

\noindent To see that the $i$th step in the lingering lattice path is well-defined when $p_{i-1}$ is in $\cC$, one uses the genericity condition on $\Gamma$, as follows.  Note first that the coordinates of each $p_i$ satisfy the inequalities $p_i(0) > \cdots > p_i(r-1)$.  This is because $p_0$ satisfies these inequalities, and the inequalities are preserved by each step in the lingering lattice path.  Furthermore, since only $d - d_0$ of the $x_i$ are nonzero and the coordinates of $d_0$ are bounded above by $d_0$, all of the coordinates of each $p_i$ are bounded above by $d$, and hence by $2g-2$.  In particular, if $p_{i-1}$ is in $\cC$ then its coordinates are distinct integers between 1 and $2g-2$. The genericity condition then ensures that $x_i \equiv (p_{i-1}(j)+1)m_i \mod \ell_i+m_i$ holds for at most one $j$, and hence the $i$th step is well-defined, as required.

As noted above, the coordinates of each point in the lingering lattice path are strictly decreasing, so $p_i$ is in the chamber $\cC$ if and only if its last coordinate is positive.  Note also that if $p_{i-1}$ is in $\cC$ then $p_{i-1} + e_j$ is not in $\cC$ exactly when $p_{i-1}(j-1)$ is only one more than $p_{i-1}(j)$.  Since $p_0(j-1)$ is only one more than $p_0(j)$, the condition on $p_{i-1} + e_j$  guarantees that, among the first $i$ steps of $P$, the number of steps in direction $e_j$ is always less than or equal to the number in direction $e_{j-1}$, for all $i, j \geq 1$.

\begin{rem}
The three cases for $p_i - p_{i-1}$ are parallel to the three cases for $D'$ in Example~\ref{ex:Circle} and the three cases in the proof of Theorem~\ref{thm:Technical}, below.  For an interpretation in terms of a chip-firing game on $\Gamma$, see Remark~\ref{rem:Game}.
\end{rem}

Let $D$ be a $v_0$-reduced divisor of degree $d$ on $\Gamma$, and let $P$
be the associated lingering lattice path in $\ZZ^r$.  Our main technical
result is then the following.

\begin{thm} \label{thm:Technical}
The divisor $D$ has rank at least $r$ if and only if the associated lingering lattice
path $P$ lies entirely in the open Weyl chamber $\cC$.
\end{thm}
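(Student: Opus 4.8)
The plan is to reduce the rank condition to the explicit chip-firing computation on a single loop from Example~\ref{ex:Circle}, and then to read off the Weyl chamber condition from the resulting transfer rule. First I would apply Luo's Theorem with the rank-determining set $A = \{v_0, \dots, v_g\}$: this reduces the statement $r(D) \ge r$ to the assertion that $D - E$ is equivalent to an effective divisor for every effective $E = r_0 v_0 + \cdots + r_g v_g$ of degree $r$. The computational engine is the transfer rule obtained from Example~\ref{ex:Circle}: if a divisor is equivalent to $a\,v_{i-1}$ together with at most one chip on the $i$th loop at position $x_i$, then reducing with respect to $v_i$ produces coefficient $a - 1$ at $v_i$ when $x_i = 0$ (one chip is deposited on the $i$th loop), coefficient $a + 1$ when $x_i \equiv (a+1) m_i \bmod \ell_i + m_i$ (the chip is absorbed), and coefficient $a$ otherwise (the chip lingers). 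These are exactly the three cases in the definition of the steps of $P$. Processing $D$ loop by loop and tracking, for each $j$, the coefficient at the current vertex of the reduction started from $d_0 - j$ chips, I would show that $p_i(j)$ records this coefficient, the flag condition in the definition recording precisely when an available absorption must be declined in order to keep the coordinates strictly decreasing; consequently $P$ lies in $\cC$ if and only if its last coordinate $p_i(r-1)$ stays positive for all $i$. Genericity enters here to guarantee that at most one direction $e_j$ is available at each step, so $P$ is well defined.

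For the forward implication I would argue by contrapositive. Suppose $P$ leaves $\cC$, so that the last coordinate first reaches zero at some loop, necessarily through a $(-1)$ step. I would then construct an explicit effective divisor $E$ of degree $r$, using $r-1$ chips to account for the overall downward shift together with chips placed immediately before the loops at which the bottom coordinate had previously gained, chosen so as to destroy exactly those gains. Reducing $D - E$ with respect to $v_g$ by the transfer rule and using genericity to control the absorption congruences, I would check that the resulting coefficient at $v_g$ is negative, so that $D - E$ is not equivalent to an effective divisor and hence $r(D) < r$.

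For the reverse implication I would induct on $r$, the base case $r = 1$ being checked directly, using for $r \ge 2$ the standard equivalence $r(D) \ge r$ if and only if $r(D - v) \ge r - 1$ for every $v$ in $A$, which again holds by Luo's Theorem. Assuming $P \subseteq \cC$, it suffices to show that for each $v \in A$ the $v_0$-reduced divisor $\overline{D - v}$ has its associated lingering lattice path inside the Weyl chamber in $\RR^{r-1}$, so that $r(\overline{D-v}) = r(D - v) \ge r - 1$ by the inductive hypothesis. For $v = v_0$ this path is obtained by discarding the top coordinate of $P$ and relaxing the top flag constraint, which can only raise the remaining coordinates, so it remains in the chamber; the work is to compute the path of $\overline{D - v_i}$ for an interior vertex, that is, to determine how removing a chip at $v_i$ and re-reducing to $v_0$ alters the data $(d_0; x_1, \dots, x_g)$ and hence the steps of $P$ after loop $i$, and to verify that the modified path again stays in the smaller chamber.

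The main obstacle is precisely this reverse direction. One cannot simply reduce to a single worst-case test divisor: because the transfer rule is not one-Lipschitz in the incoming coefficient---an absorption available for $a$ chips but not for $a-1$ makes neighbouring trajectories diverge---removing a chip earlier is sometimes better and sometimes worse for the adversary than removing it later, so no distribution of the $r$ removed chips is uniformly extremal. It is exactly the full flag of $r$ coordinates, kept spread apart by the condition that the number of $e_j$ steps never exceeds the number of $e_{j-1}$ steps, that supplies enough independent slack to absorb any removal; making the effect of an interior removal on the lattice path precise, and checking that the deposited leftover chips never interfere with later reductions, is where the real care is required.
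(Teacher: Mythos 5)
Your setup---Luo's Theorem with $A = \{v_0,\ldots,v_g\}$ and the single-loop transfer rule of Example~\ref{ex:Circle}---is exactly the paper's, but both implications have genuine gaps at their crux, and one of your foundational claims is false as stated. The path coordinate $p_i(j)$ is \emph{not} the coefficient at $v_i$ of the reduction of $D - jv_0$: when $p_{i-1}+e_j \notin \cC$ the path lingers, declining an absorption that the actual chip-firing reduction of $D - jv_0$ would perform, so $P$ is not the trajectory of any single reduction. The correct statement, which is the paper's Proposition~\ref{prop:Induction}, is that $p_n(j)$ is a lower bound for the coefficient of $v_n$ in the $v_n$-reduced divisor of $D - E_n$ over \emph{all} effective $E_n$ of degree $j$ supported in $\{v_0,\ldots,v_n\}$, together with an attainability claim for a cleverly chosen $E_n$; the lingering condition encodes that an adversary who withholds an antichip until just before a collision of coordinates can nullify an absorption, which is why the minimum is not achieved by concentrating antichips at $v_0$.

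Both of your directions founder on precisely this point. For the forward direction, your construction ``place chips immediately before the loops where the bottom coordinate gained, to destroy those gains'' is not shown to be extremal, and the obstruction is the very non-monotonicity you name in your last paragraph: diminishing the pile changes the congruence class it carries into every later loop, so destroying one gain can create new absorptions downstream. The paper's equality construction handles this by choosing, at each step, $r_n$ maximal such that the entries of $p_n$ at positions $j-r_n,\ldots,j$ are consecutive integers; the maximality is used exactly to rule out the bad case where the diminished pile picks up a chip while the path lingers. Nothing in your sketch replaces this device, and your plan to verify negativity only at $v_g$ adds a further unaddressed step (tracking a negative pile across loops carrying chips), whereas stopping at the first loop $n$ with $p_n(r-1)=0$ suffices. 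For the reverse direction, your induction on $r$ requires a transformation law describing the lingering lattice path of the $v_0$-reduced form of $D - v_i$ for interior $v_i$---re-reducing moves the antichip back through loops $1,\ldots,i$ and alters $(d_0; x_1,\ldots,x_g)$---and you explicitly leave this computation open while correctly identifying it as ``where the real care is required.'' The paper avoids ever needing such a law by inducting on loops rather than on $r$: Proposition~\ref{prop:Induction} tracks all $r$ coordinates simultaneously, processes any adversary divisor $E = r_0v_0 + \cdots + r_gv_g$ left to right, and uses part~(\ref{item:Restriction}) to keep the data on later loops intact. That simultaneous bookkeeping is the missing idea that makes the ``slack in the full flag'' precise; without it, or your unproven transformation law, the proposal is a plan rather than a proof.
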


\noindent Before giving the proof of Theorem~\ref{thm:Technical}, we explain how it implies Theorems~\ref{thm:Main}, \ref{thm:dimension}, and \ref{thm:Count}, and give examples illustrating the bijection between divisors and tableaux when $\rho$ is zero.

\begin{proof}[Proof of Theorem~\ref{thm:Main}]
Suppose $D$ is a divisor of degree $d$ and rank at least $r$ on $\Gamma$, and let $(d_0; x_1, \ldots, x_g)$ be the data associated to the $v_0$-reduced divisor equivalent to $D$.  We must show that $\rho = g - (r+1)(g-d+r)$ is nonnegative and $d_0$ is less than or equal to $r + \rho$.

Exactly $d-d_0$ of the $x_i$ are nonzero, so the lingering lattice path $P$ includes $g-d + d_0$ steps in the direction $(-1, \ldots, -1)$.  In particular, the last coordinate of $p_g$ is
\[
p_g(r-1) = d - g- r + 1 + \#\{ \mbox{steps of $P$ in direction $e_{r-1}$} \}.
\]
\noindent By Theorem~\ref{thm:Technical}, the lattice path $P$ lies
in the open Weyl chamber $\cC$, so $p_g(r-1)$ is strictly positive.
Therefore $P$ includes at least $g-d+r$ steps in direction $e_{r-1}$.
By construction, the number of steps of $P$ in the $e_i$ direction is
at least the number of steps in the $e_{i+1}$ direction, for all $i$,
so $P$ must include at least $g-d+r$ steps in each of the $r$ coordinate
directions.  Therefore, the total number steps is
\[
g \geq r(g-d+r) + (g-d +d_0). 
\]
Rearranging terms then shows that $d_0\leq r+\rho$, which proves part (2) of the theorem. It remains to show that $\rho$ is nonnegative.

Since $D$ is $v_0$-reduced and of rank at least $r$, $D - rv_0$ must be effective.  Hence $d_0$ is at least $r$, and $\rho\geq 0$, as required.
\end{proof}

\begin{proof}[Proof of Theorem~\ref{thm:dimension}]
Each divisor class of degree $d$ and rank $r$ on $\Gamma$ is associated to some lattice path in $\cC$, and the proof of Theorem~\ref{thm:Main} above shows that this path must include at least $g-d+r$ steps in each of the $r$ coordinate directions, as well as at least $g-d+r$ steps in the direction $(-1,\ldots, -1)$.  Therefore, the lattice path has at most $\rho = g-(r+1)(g-d+r)$ lingering steps.  Given a lingering lattice path $p_0, \ldots, p_g$ with $\min\{ \rho, g \}$ steps, the $v_0$-reduced realizations of this lattice path are chip configurations with $p_0(0)$ chips at $v_0$, a chip in location $x_i \equiv (p_{i-1}(j) + 1)m_i \mod \ell_i + m_i$ in the cell $\gamma_i$ if $p_i - p_{i-1} = e_j$, a chip anywhere else in $\gamma_i$ if the $i$th step is lingering, and no chips on $\gamma_i$ otherwise.  Since there are finitely many such paths, and the dimension of the space of realizations is equal to the number of lingering steps, it follows that the dimension of $W^r_d(\Gamma)$ is $\min \{ \rho, g \}$, as required.
\end{proof}

\begin{proof}[Proof of Theorem~\ref{thm:Count}]
Suppose $\rho$ is zero.  Then each $v_0$-reduced divisor $D$ of degree $d$
and rank $r$ has $d_0 = r$ and the associated lattice path $P$ has exactly
$g-d+r$ steps in each of the coordinate directions, $g-d+r$ steps in the
direction $(-1, \ldots, -1)$, and no lingering steps.  Moreover, each such
lattice path corresponds to a unique divisor of degree $d$ and rank $r$.
Therefore, there is a natural bijection between divisors of degree $d$
and rank $r$ on $\Gamma$ and $g$-step lattice paths from $(r, \ldots,
1)$ to itself in the open Weyl chamber $\cC$.  These lattice paths are in
natural bijection with standard tableaux on the rectangular shape $(r+1)
\times (g-d+r)$, as follows.  We label the columns of the tableau from
zero to $r$.  Then the number $i$ appears in the column $j$ column of
the tableau corresponding to $P$ for $0 \leq j \leq r-1$ if the $i$th
step of $P$ is in the $j$th coordinate direction, and in column $r$
if the $i$-th step of $P$ is in the direction $(-1,\ldots,-1)$.
\end{proof}

\begin{ex}
Consider the case where $(g,r,d) = (4,1,3)$.  Then $\rho$ is zero and $\lambda$ is two, corresponding to the classical fact that there are exactly two lines meeting four general lines in $\P^3$.  Theorem~\ref{thm:Count} says that there are exactly two $v_0$-reduced divisors of degree three and rank one on $\Gamma$, corresponding to the lattice paths $1,2,3,2,1$ and $1,2,1,2,1$ in $\ZZ$, respectively.

The first path is associated to the divisor
\begin{center}
\includegraphics[scale=.6]{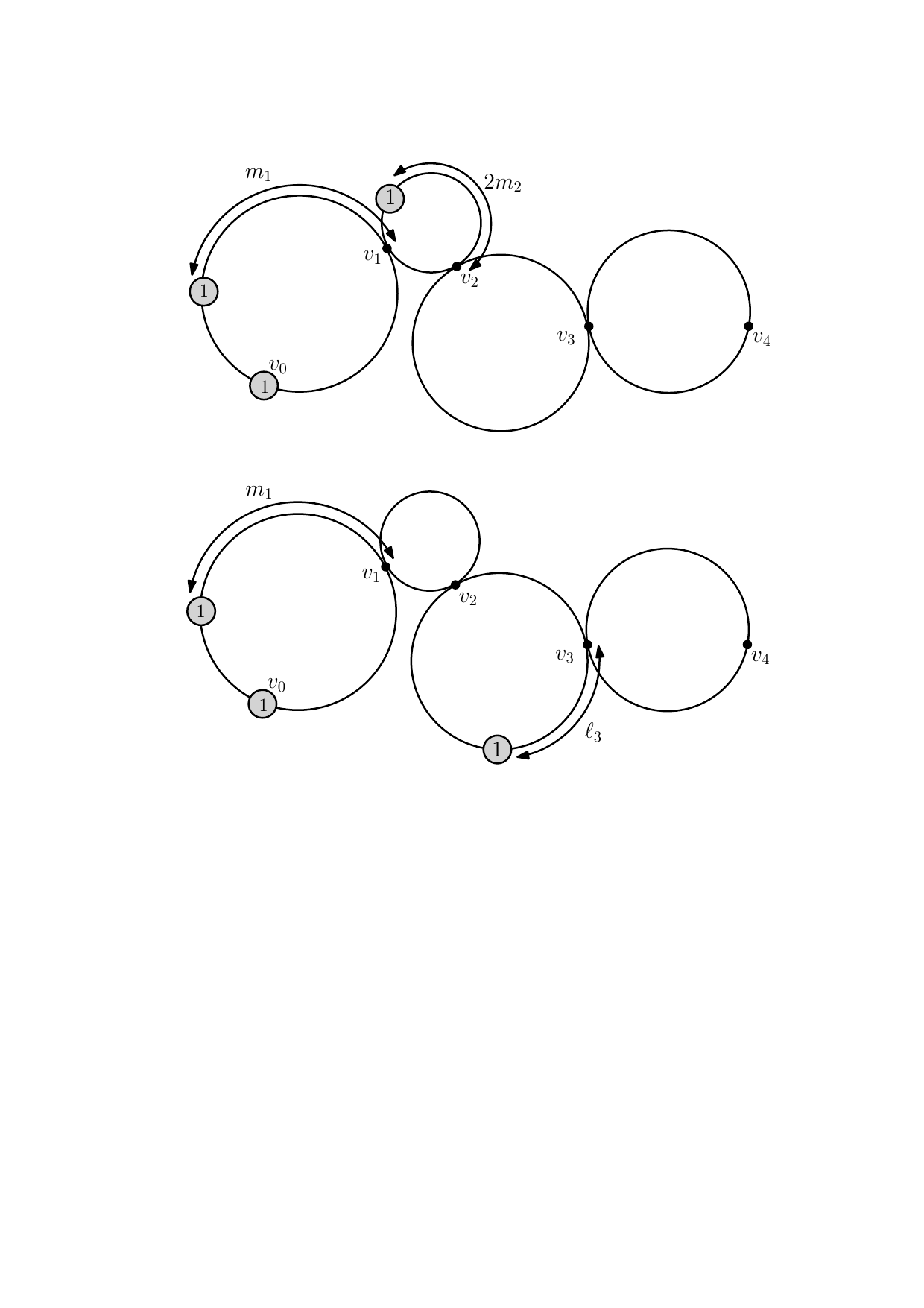}
\end{center}
which has points on the first two loops, since the first two steps of the lattice path are in the positive direction.  This corresponds to the tableau
\[
\begin{tabular}{|c|c|}
\hline
1&3\\
\hline
2&4\\
\hline
\end{tabular}
\]
in which one and two appear in the first column.  The second path is associated to the divisor
\begin{center}
\includegraphics[scale=.6]{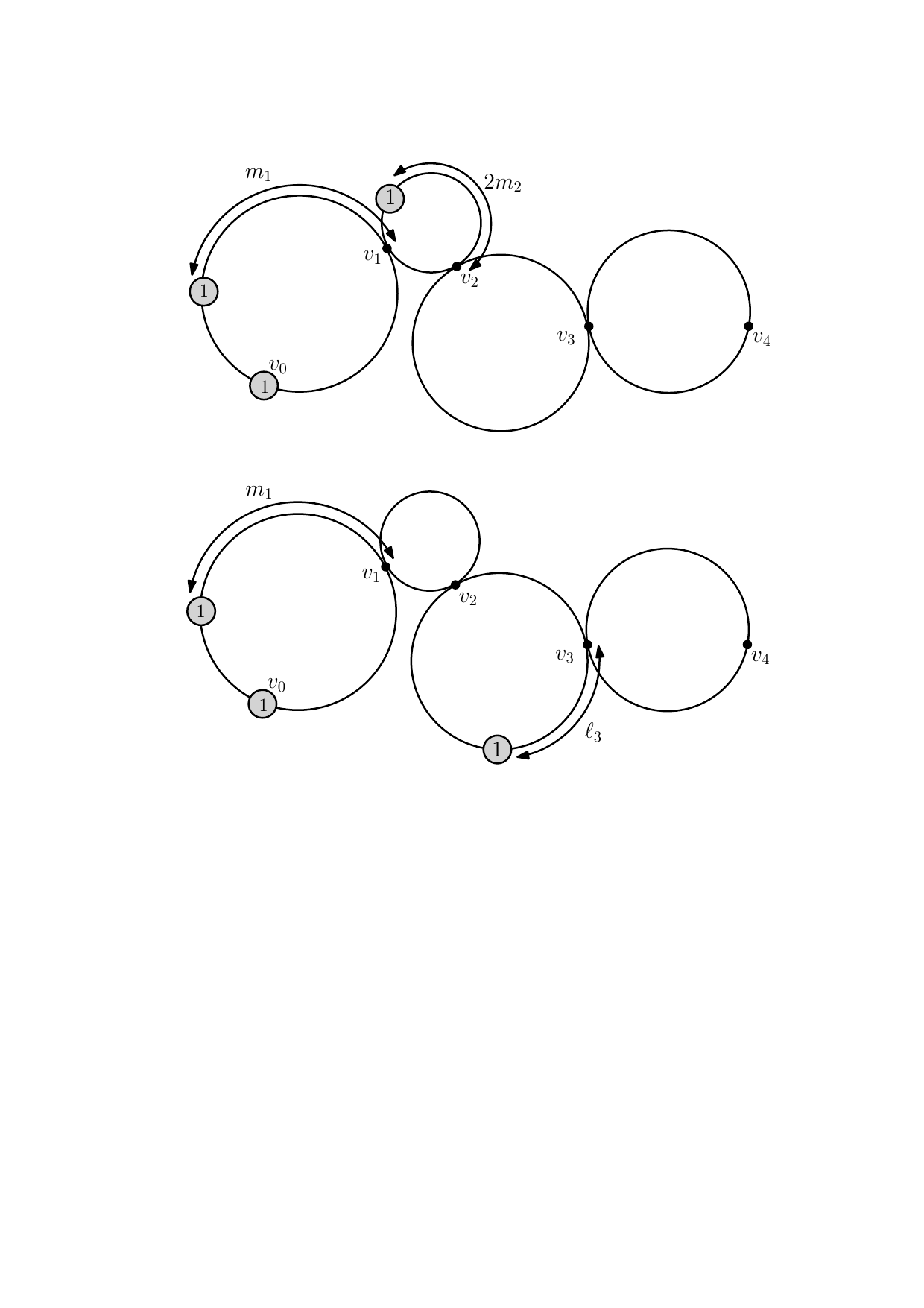}
\end{center}
which has points on the first and third loops.  This corresponds to the tableau
\[
\begin{tabular}{|c|c|}
\hline
1&2\\
\hline
3&4\\
\hline
\end{tabular}
\]
in which one and three appear in the first column.  It is a pleasant exercise to use the chip-firing moves in Example~\ref{ex:Circle} to show that each of these divisors has rank one.
\end{ex}

\begin{ex} \label{ex:g12r3d12}
Consider the case $(g,r,d) = (12,3,12)$.  Then $\rho$ is zero and $\lambda$ is 462.  The $v_0$-reduced divisors of degree 12 and rank 3 correspond to the 462 standard tableaux on a $4 \times 3$ rectangle.  For instance, the tableau
\[
\begin{tabular}{|c|c|c|c|}
\hline
1&3&4&6\\
\hline
2&5&7&9\\
\hline
8&10&11&12\\
\hline
\end{tabular}
\]
corresponds to the lattice path
\begin{align*}
&(3,2,1),(4,2,1),(5,2,1),(5,3,1),(5,3,2),(5,4,2),(4,3,1),\\
&(4,3,2),(5,3,2),(4,2,1),(4,3,1),(4,3,2),(3,2,1).
\end{align*}
The associated divisor has multiplicity three at $v_0$ and one carefully chosen point on each cell $\gamma_i$ for $i \neq 6,9,12$.  The locations of these points can be read off from the tableau and lattice path, as follows.  The columns of the tableau are labeled from zero to three and 10 appears in the column labeled one, so the point in $\gamma_{10}$ is at distance
\[
x_{10} \equiv 3 m_{10} \mod \ell_{10} + m_{10}
\]
from $v_9$, in the counterclockwise direction, with the coefficient of $m_{10}$ given by the formula $p_9(1) + 1 = 3$.
\end{ex}

\begin{rem} \label{rem:Game}
One may think of a divisor of degree $d$ and rank at least $r$ on $\Gamma$ as a winning strategy in the following game, which we call the \emph{Brill-Noether game}.  First, Brill chooses a divisor $D$ of degree $d$ on $\Gamma$, and shows it to Noether.  Then, Noether chooses an effective divisor $E$ of degree $r$ on $\Gamma$ and shows it to Brill.  Finally, Brill performs chip-firing operations on $D-E$, and wins if he reaches an effective divisor.  Otherwise Noether wins.  One can imagine a similar game with divisors on an algebraic curve.

Our main result says that Brill wins the game if and only if $\rho$ is nonnegative, and our proof is a combinatorial classification of Brill's winning strategies.  In terms of this game, the theory of $v_0$-reduced divisors implies that Brill has an optimal strategy in which he chooses an effective $v_0$-reduced divisor with $d_0 \geq r$, and Luo's Theorem implies that Noether has an optimal strategy in which he chooses a divisor supported in $\{v_0, \ldots, v_g \}$.

When the game is played, Brill typically starts with a large pile of chips at $v_0$ and moves this pile to the right using chip-firing moves, as in Example~\ref{ex:Circle}.  The pile grows when it picks up an additional chip that Brill placed at $x_i$ in the $i$th loop, if $x_i$ is carefully chosen.  The pile shrinks when it encounters Noether's antichips in $-E$, or when it crosses over an empty loop and one chip is left behind in the process of moving the pile to the next vertex.  The dynamics of this growing and shrinking pile of chips are encoded in the coordinates of the points in the lattice path $P$.  Roughly speaking, the $j$th coordinate $p_i(j)$ is the size of the pile when it reaches $v_i$, if Noether has distributed $j$ antichips over the vertices $v_0, \ldots, v_i$, assuming optimal play.  For instance, if Noether places $j$ antichips at $v_0$, then Brill is left with $p_0(j) = d_0 - j$ chips at $v_0$.
\end{rem}

The preceding remarks are made precise in the following proposition, which will be used in the proof of Theorem~\ref{thm:Technical}.
As before, let $\gamma_i$ be the $i$th loop minus $v_{i-1}$, which is
an open cell in the decomposition of $\Gamma \smallsetminus v_0$,
described in Example~\ref{ex:Reduced}.  The restriction of a divisor
$D = a_1 w_1 + \cdots + a_s w_s$ to $\gamma_j$ is defined as
\[
D|_{\gamma_j} = \sum_{w_i \in \gamma_j} a_i w_i.
\]

\begin{prop} \label{prop:Induction}
Suppose $p_0, \ldots, p_{n-1}$ are in $\cC$.  Let $E_n$ be an effective divisor of degree $j < r$ with support contained in $\{v_0, \ldots, v_n\}$, and let $D_n$ be the $v_n$-reduced divisor equivalent to $D - E_n$.  Then
\begin{enumerate}
\item \label{item:Coefficient} the coefficient of $v_n$ in $D_n$ is at least
$p_n(j)$; and
\item \label{item:Restriction} for $i > n$, the restriction $D_n|_{\gamma_i}$  is equal to $D|_{\gamma_i}$.
\end{enumerate}
Furthermore, for each $j < r$ there exists an effective divisor $E_n$ of degree $j$
with support in $\{v_0,\ldots,v_n\}$ such that equality
holds in (\ref{item:Coefficient}).
\end{prop}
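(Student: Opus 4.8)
The plan is to induct on $n$, with the inductive step describing how the optimal divisor at $v_{n-1}$ is transported across the $n$th loop. For the base case $n = 0$, the only effective degree-$j$ divisor supported on $\{v_0\}$ is $j v_0$, so $D_0 = D - j v_0$ remains $v_0$-reduced with coefficient $d_0 - j = p_0(j)$; equality holds and (\ref{item:Restriction}) is immediate since nothing moves. For the inductive step I would write $E_n = E' + c\, v_n$ with $E'$ supported on $\{v_0, \ldots, v_{n-1}\}$ and $c$ the coefficient of $v_n$, so that $\deg E' = j - c < r$. Since $p_0, \ldots, p_{n-1} \in \cC$, the inductive hypothesis applies to $E'$: if $D'$ denotes the $v_{n-1}$-reduced divisor equivalent to $D - E'$, then the coefficient $a$ of $v_{n-1}$ in $D'$ satisfies $a \geq p_{n-1}(j - c)$, with equality attained for a suitable $E'$, and $D'|_{\gamma_i} = D|_{\gamma_i}$ for every $i \geq n$.

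The technical heart is that passing from $D'$ to the $v_n$-reduced divisor $D_n$ equivalent to $D' - c\, v_n$ changes nothing outside the $n$th loop. Because $D'$ is $v_{n-1}$-reduced, running Dhar's algorithm from $v_n$ burns the two edges of the $n$th loop and halts at $v_{n-1}$; the lone chips in the cells $\gamma'_1, \ldots, \gamma'_{n-1}$ and $\gamma_{n+1}, \ldots, \gamma_g$ are already reduced and stay fixed, while the antichips at the basepoint merely lower the coefficient there. Thus $D_n$ is obtained by transporting the pile of $a$ chips at $v_{n-1}$, together with the point $D|_{\gamma_n}$, across the $n$th loop exactly as in Example~\ref{ex:Circle}, and then subtracting $c$. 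Since the pile is deposited at $v_n$ and leaves at most one chip in $\gamma'_n$, nothing reaches any $\gamma_i$ with $i > n$, which proves (\ref{item:Restriction}). Example~\ref{ex:Circle} shows the resulting coefficient of $v_n$ is $f(a) - c$, where $f(a) = a - 1$ when $x_n = 0$, $f(a) = a + 1$ when the point is absorbed, i.e. $x_n \equiv (a+1) m_n$, and $f(a) = a$ otherwise. Crucially, $f$ is nondecreasing, since $f(a) \leq a + 1 \leq f(a')$ whenever $a < a'$.

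Combining monotonicity of $f$ with the inductive bound $a \geq p_{n-1}(j - c)$ gives, for every $E_n$ of degree $j$, the estimate $f(a) - c \geq f\big(p_{n-1}(j-c)\big) - c$, with equality realized by the optimal $E'$ furnished above. Hence the minimum coefficient of $v_n$ over all effective degree-$j$ divisors supported on $\{v_0, \ldots, v_n\}$ equals
\[
\min_{0 \leq c \leq j} \Big[\, f\big(p_{n-1}(j - c)\big) - c \,\Big],
\]
and proving the proposition reduces to checking that this minimum is exactly $p_n(j)$; the lower bound in (\ref{item:Coefficient}) and the ``furthermore'' clause then follow at once.

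The main obstacle is evaluating this minimum and matching it to the lattice-path step. When $x_n = 0$ one has $f(a) = a - 1$, and when no congruence $x_n \equiv (p_{n-1}(k)+1) m_n$ holds there is no absorption; in either case the minimum sits at $c = 0$, giving $p_{n-1}(j) - 1 = p_n(j)$ or $p_{n-1}(j) = p_n(j)$ respectively. Genericity guarantees that otherwise the congruence holds for exactly one index $k = j_0$, so absorption in the recursion occurs only at $c = j - j_0$. Since consecutive coordinates of $p_{n-1} \in \cC$ differ by at least one, the non-absorbing terms are nondecreasing in $c$, and for $j \neq j_0$ the minimum is attained at $c = 0$ and equals $p_{n-1}(j) = p_n(j)$. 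The delicate case is $j = j_0$, where the minimum is $\min\{\,p_{n-1}(j_0) + 1,\ p_{n-1}(j_0 - 1) - 1\,\}$, coming from absorption at $c = 0$ versus the nearest non-absorbing term at $c = 1$. Precisely the Weyl-chamber condition decides the outcome: if $p_{n-1} + e_{j_0} \in \cC$ then $p_{n-1}(j_0 - 1) \geq p_{n-1}(j_0) + 2$ and the minimum is $p_{n-1}(j_0) + 1 = p_n(j_0)$, matching a step in direction $e_{j_0}$; if instead $p_{n-1}(j_0 - 1) = p_{n-1}(j_0) + 1$ the minimum is $p_{n-1}(j_0) = p_n(j_0)$, matching a lingering step. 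Verifying that this gap dichotomy, together with genericity, accounts for all three types of lattice-path step is the crux of the argument, and is exactly what the Weyl-chamber hypothesis in the definition of $P$ was designed to encode.
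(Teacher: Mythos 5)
Your proof is correct, and it shares the paper's skeleton---induction on $n$, splitting $E_n = E' + c\,v_n$ with $E'$ supported to the left, and transporting the pile across the $n$th loop via the three cases of Example~\ref{ex:Circle}, with part~(\ref{item:Restriction}) proved the same way (the transported divisor is visibly $v_n$-reduced, so nothing outside the $n$th loop moves)---but you organize the extremal analysis genuinely differently. The paper proves the inequality in (\ref{item:Coefficient}) by direct case-by-case estimates using $k \geq p_{n-1}(j-r_n) \geq p_{n-1}(j) + r_n$, and then proves the ``furthermore'' clause by a second induction that explicitly constructs an optimal $E_n$, taking $r_n$ maximal so that the entries of $p_n$ at positions $j-r_n,\ldots,j$ are consecutive integers, and ruling out the one problematic scenario (absorption coinciding with a lingering step) by contradiction with that maximality. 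You instead encode the loop transport as a single map $f$ with $f(a) \in \{a-1, a, a+1\}$, observe that $f$ is nondecreasing, and deduce the exact identity
\[
\min_{0 \le c \le j}\Bigl[\, f\bigl(p_{n-1}(j-c)\bigr) - c \,\Bigr] = p_n(j),
\]
which yields the lower bound and its attainment simultaneously. Your ``delicate case'' $j = j_0$, where the minimum is $\min\{p_{n-1}(j_0)+1,\ p_{n-1}(j_0-1)-1\}$ and the Weyl-chamber gap decides between an $e_{j_0}$-step and a lingering step, is exactly the paper's problematic Case~2, handled more transparently: when $p_{n-1}(j_0-1) = p_{n-1}(j_0)+1$, the non-absorbing term at $c=1$ realizes the lingering value $p_{n-1}(j_0)$, which is precisely the situation the paper's contradiction argument guards against. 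Your route buys a clean separation between the combinatorics of a single loop (the map $f$) and the optimization over antichip placements, and it makes the optimal \emph{value}, not merely an optimal divisor, explicit; the paper's route instead exhibits the extremal $E_n$ concretely, but since Theorem~\ref{thm:Technical} only uses the equality statement, nothing is lost. One small repair: your one-line justification of monotonicity, ``$f(a) \le a+1 \le f(a')$,'' is not valid when $x_n = 0$ (there $f(a') = a'-1$ can equal $a$), but in that case $f$ is the uniform shift $a \mapsto a-1$ and monotonicity is immediate, so the claim stands.
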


\begin{proof}
Write $E = r_0 v_0 + \cdots + r_n v_n$.  If $n = 0$, then $D_0$ is $D - r_0 v_0$, and the proposition is clear.  We proceed to prove (\ref{item:Coefficient}) and (\ref{item:Restriction}) by induction on $n$.  Let $D_{n-1}$ be the $v_{n-1}$-reduced divisor equivalent to $D - r_0 v_0 - \cdots - r_{n-1} v_{n-1}$.  By the induction hypothesis, we may assume that the coefficient $k$ of $v_{n-1}$ in $D_{n-1}$ is at least $p_{n-1}(j - r_n)$ and the restriction of $D_{n-1}$ to $\gamma_i$ is equal to $D|_{\gamma_i}$ for $i \geq n$.

Let $D'_{n-1}$ be the divisor equivalent to $D_{n-1}$ obtained by moving the chips at $v_{n-1}$ to $v_n$, as in Example~\ref{ex:Circle}.  Then $D'_{n-1}$ is effective and each cell of $\Gamma \smallsetminus v_n$ contains at most one point of $D_{n-1}'$, so $D_{n-1}'$ is $v_n$-reduced.  It follows that $D_n = D'_{n-1} - r_n v_n$ and the restriction of $D_n$ to $\gamma_i$ is equal to $D|_{\gamma_i}$ for $i > n$, as required.  We now show that the coefficient of $v_n$ in $D_n$ is at least $p_n(j)$ by considering three cases according to the position of the point of $D$, if any, in $\gamma_n$.
\smallskip

\emph{Case 1.} There is no point of $D$ in $\gamma_n$.  In this case,
the $n$th step of $P$ is in the direction $(-1, \ldots, -1)$, so $p_n(j)$
is equal to $p_{n-1}(j) - 1$.  In terms of the Brill-Noether game, the
pile of chips at $v_{n-1}$ shrinks by one as it moves from $v_{n-1}$
to $v_n$, since one chip must be left behind in $\gamma'_n$.  In other
words, the equivalence $D'_{n-1} \sim D_{n-1}$ is given by the first
case in Example~\ref{ex:Circle}, and hence the coefficient of $v_n$
in $D_{n-1}'$ is $k-1$, and that of $D_n$ is $k-1-r_n$. Now $k$ is at
least $p_{n-1}(j-r_n)$, which is at least $p_{n-1}(j) + r_n$, since the
coordinates of $p_{n-1}$ are strictly decreasing integers.  Therefore, the
coefficient of $v_n$ in $D_n$ is greater than or equal to $p_{n-1}(j)-1$,
as required. For later use note that equality holds if $k=p_{n-1}(j-r_n)$
and moreover the entries of $p_i$ (and hence of $p_{i-1}$) at positions
$j-r_n,\ldots,j$ are consecutive integers.

\bigskip

\emph{Case 2.}  The point of $\gamma_n$ in $D$ is at $x_n \equiv
(p_{n-1}(j - r_n)+1) m_n \mod \ell_n + m_n$.  By hypothesis $k$ is at
least $p_{n-1}(j-r_n)$.  If it is equal to $p_{n-1}(j-r_n)$ then the
pile of chips picks up one extra as it moves from $v_{n-1}$ to $v_n$.
In other words, the equivalence $D_{n-1} \sim D'_{n-1}$ is given by the
second case in Example~\ref{ex:Circle}, and hence the coefficient of $v_n$
in $D_{n}$ is $p_{n-1}(j-r_n) + 1 - r_n \geq p_n(j-r_n) -r_n \geq p_n(j)$,
as required. For later use note that equality holds if $p_{n-1}(j-r_n)+1$
does not occur among the entries of $p_{n-1}$ and moreover the entries of $p_n$
at positions $j-r_n,\ldots,j$ are consecutive integers.

On the other hand, if $k$ is greater than $p_{n-1}(j-r_n)$ then the
equivalence $D_{n-1} \sim D'_{n-1}$ is given by the third case in
Example~\ref{ex:Circle}, and the coefficient of $v_n$ in $D_n$
is $k - r_n$, which is again greater than or equal to $p_n(j)$.

\bigskip

\emph{Case 3.}  There is a point of $\gamma_n$ in $D$, but not at $x_n$.
In this case, the pile does not shrink as it moves from $v_{n-1}$ to
$v_n$, and $p_n(j-r_n)$ is equal to $p_{n-1}(j-r_n)$.  So the coefficient
of $v_n$ in $D_n$ is at least $k -r_n \geq p_n(j)$, as required.  It is worth noting that if $k$ is greater than $p_{n-1}(j-r_n)$ and the point of $D$ is at $(k+1)m_n$ then the pile will grow as it moves from $v_{n-1}$ to $v_n$, as in Case 2, but the lattice path still lingers.  However, if $k=p_{n-1}(j-r_n)$ then the pile does not grow, and moreover the entries of $p_n$ at positions $j-r_n,\ldots,j$ are consecutive integers.  

\bigskip

It remains to show that $E_n$ can be chosen so that equality holds
in (\ref{item:Coefficient}). Again, we proceed by induction on
$n$. For $n=0$ we can take $E_n=j v_0$. For the induction step from
$n-1$ to $n$ let $r_n$ be maximal such that the entries of $p_n$
at positions $j-r_n,\ldots,j$ are consecutive integers, and let
$E_{n-1}$ be an effective divisor of degree $j-r_n$ with support in
$\{v_0,\ldots,v_{n-1}\}$ such that the coefficient $k$ of $v_{n-1}$
of the $v_{n-1}$-reduced divisor equivalent to $D-E_{n-1}$ equals
$p_{n-1}(j-r_n)$. To prove that $E_n:=E_{n-1}+r_n v_n$ has the required
property we go through the Cases 1--3 above. In Cases 1 and 3 the
equality $k=p_{n-1}(j-r_n)$ and the fact that the entries of $p_n$
at positions $j-r_n,\ldots,j$ are consecutive suffice to conclude that
the coefficient of $v_n$ in $E_n$ equals $p_{n-1}(j)$. In Case 2 the
only thing that could go wrong is that the pile of $k=p_{n-1}(j-r_n)$
chips at $v_{n-1}$ picks up an additional chip when moved to $v_n$ but
$p$ lingers at the $n$-th step since the entries of $p_{n-1}+e_{j-r_n}$
are not all distinct. Then we have $j-r_n>0$ and the entry of $p_{n-1}$
at position $j-r_n-1$ equals $p_{n-1}(j-r_n)+1$, which is then also
the entry of $p_n$ at position $j-r_n-1$. But then the entries of
$p_n$ at positions $j-r_n-1,\ldots,j$ are consecutive integers,
contradicting the maximality of $r_n$. We conclude that $E_n$ has the
required property.
\end{proof}

We conclude by applying the proposition to prove Theorem~\ref{thm:Technical}.

\begin{proof}[Proof of Theorem~\ref{thm:Technical}]
Suppose the lattice path $P$ lies in the open Weyl chamber $\cC$.  Let $E = r_0 v_0 + \cdots + r_g v_g$ be an effective divisor of degree $r$, and let $n$ be the largest index such that $r_n$ is strictly positive.  Let $E_n = E - v_n$.  By Proposition~\ref{prop:Induction}, the difference $D - E_n$ is equivalent to an effective divisor $D'_n$ in which the coefficient of $D'_n$ is at least $p_n(r-1)$, which is strictly positive since $P$ is in $\cC$.  Therefore $D - E$ is equivalent to the effective divisor $D'_n - v_n$.  Since $E$ is an arbitrary effective divisor of degree $r$ with support in $\{v_0, \ldots, v_g \}$, it follows by Luo's Theorem that $D$ has rank at least $r$.

For the converse, suppose the lattice path $P$ does not lie in $\cC$, and let $n$ be the smallest index such that $p_n$ is not in $\cC$.  By the construction of $P$, all coordinates of $p_i$ are nonnegative for $i \leq n$, and $p_n(r-1) = 0$.  By Proposition~\ref{prop:Induction}, there exists an effective divisor $E_n = r_0 v_0 + \cdots + r_n v_n$ of degree $r-1$ such that the coefficient of $v_n$ in the $v_n$-reduced divisor $D'_n$ equivalent to $D - E_n$ is zero.  Then $E = E_n + v_n$ is an effective divisor of degree $r$, and the $v_n$-reduced divisor equivalent to $D - E$ is $D'_n - v_n$, which is not effective.  Therefore, $D - E$ is not equivalent to any effective divisor, and hence $D$ has rank less than $r$, as required.
\end{proof}

\bibliography{math}

\newcommand{\etalchar}[1]{$^{#1}$}
\providecommand{\bysame}{\leavevmode\hbox to3em{\hrulefill}\thinspace}
\providecommand{\MR}{\relax\ifhmode\unskip\space\fi MR }
% \MRhref is called by the amsart/book/proc definition of \MR.
\providecommand{\MRhref}[2]{%
  \href{http://www.ams.org/mathscinet-getitem?mr=#1}{#2}
}
\providecommand{\href}[2]{#2}
\begin{thebibliography}{HLM{\etalchar{+}}08}

\bibitem[AF11]{AproduFarkas11}
M.~Aprodu and G.~Farkas, \emph{Koszul cohomology and applications to moduli},
  Grassmannians, moduli spaces, and vector bundles (D.~Ellwood and E.~Previato,
  eds.), CMI/AMS, 2011.

\bibitem[Bak08]{Baker08}
M.~Baker, \emph{Specialization of linear systems from curves to graphs},
  Algebra Number Theory \textbf{2} (2008), no.~6, 613--653.

\bibitem[BF11]{BakerFaber11}
M.~Baker and X.~Faber, \emph{Metric properties of the tropical {A}bel-{J}acobi
  map}, J. Algebraic Combin. \textbf{33} (2011), no.~3, 349--381.

\bibitem[BN07]{BakerNorine07}
M.~Baker and S.~Norine, \emph{{R}iemann-{R}och and {A}bel-{J}acobi theory on a
  finite graph}, Adv. Math. \textbf{215} (2007), no.~2, 766--788.

\bibitem[BN09]{BakerNorine09}
\bysame, \emph{Harmonic morphisms and hyperelliptic graphs}, Int. Math. Res.
  Not. (2009), no.~15, 2914--2955.

\bibitem[BPR11]{BPR11}
M.~Baker, S.~Payne, and J.~Rabinoff, \emph{Nonarchimedean geometry,
  tropicalization, and metrics on curves}, preprint, arXiv:1104.0320v1, 2011.

\bibitem[Cap11]{Caporaso11b}
L.~Caporaso, \emph{Algebraic and combinatorial {B}rill-{N}oether theory},
  preprint, arXiv:1106.1140v1, 2011.

\bibitem[CC10]{CoolsCoppens10}
F.~Cools and M.~Coppens, \emph{Linear pencils on graphs and real curves},
  arXiv:1006.1770v1, 2010.

\bibitem[Dha90]{Dhar90}
D.~Dhar, \emph{Self-organized critical state of sandpile automaton models},
  Phys. Rev. Lett. \textbf{64} (1990), no.~14, 1613--1616.

\bibitem[EH83]{EisenbudHarris83c}
D.~Eisenbud and J.~Harris, \emph{A simpler proof of the {G}ieseker-{P}etri
  theorem on special divisors}, Invent. Math. \textbf{74} (1983), no.~2,
  269--280.

\bibitem[FL81]{FultonLazarsfeld81}
W.~Fulton and R.~Lazarsfeld, \emph{On the connectedness of degeneracy loci and
  special divisors}, Acta Math. \textbf{146} (1981), no.~3-4, 271--283.

\bibitem[Ful97]{Fulton97}
W.~Fulton, \emph{Young tableaux}, London Mathematical Society Student Texts,
  vol.~35, Cambridge University Press, Cambridge, 1997.

\bibitem[GH80]{GriffithsHarris80}
P.~Griffiths and J.~Harris, \emph{On the variety of special linear systems on a
  general algebraic curve}, Duke Math. J. \textbf{47} (1980), no.~1, 233--272.

\bibitem[GK08]{GathmannKerber08}
A.~Gathmann and M.~Kerber, \emph{A {R}iemann-{R}och theorem in tropical
  geometry}, Math. Z. \textbf{259} (2008), no.~1, 217--230.

\bibitem[Har92]{Harris92}
J.~Harris, \emph{Algebraic geometry: A first course}, Graduate Texts in
  Mathematics, vol. 133, Springer-Verlag, New York, 1992.

\bibitem[HKN08]{HladkyKralNorine08}
J.~Hladk\'y, D.~Kr\'al', and S.~Norine, \emph{Rank of divisors on tropical
  curves}, preprint, arXiv:0709.4485v2, 2008.

\bibitem[HLM{\etalchar{+}}08]{HLMPPW08}
A.~Holroyd, L.~Levine, K.~M{\'e}sz{\'a}ros, Y.~Peres, J.~Propp, and D.~Wilson,
  \emph{Chip-firing and rotor-routing on directed graphs}, In and out of
  equilibrium. 2, Progr. Probab., vol.~60, Birkh\"auser, Basel, 2008,
  pp.~331--364.

\bibitem[HMY09]{HMY09}
C.~Haase, G.~Musiker, and J.~Yu, \emph{Linear systems on tropical curves}, To
  appear in Math. Z. arXiv:0909.3685, 2009.

\bibitem[Kem71]{Kempf71}
G.~Kempf, \emph{Schubert methods with an application to algebraic curves},
  Publ. Math. Centrum, 1971.

\bibitem[KL72]{KleimanLaksov72}
S.~Kleiman and D.~Laksov, \emph{On the existence of special divisors}, Amer. J.
  Math. \textbf{94} (1972), 431--436.

\bibitem[Laz86]{Lazarsfeld86}
R.~Lazarsfeld, \emph{Brill-{N}oether-{P}etri without degenerations}, J.
  Differential Geom. \textbf{23} (1986), no.~3, 299--307.

\bibitem[LPP11]{LPP11}
C.-M. Lim, S.~Payne, and N.~Potashnik, \emph{A note on {B}rill-{N}oether thoery
  and rank determining sets for metric graphs}, To appear in Int. Math. Res.
  Not. arXiv:1106.5519v1, 2011.

\bibitem[Luo11]{Luo11}
Y.~Luo, \emph{Rank-determining sets of metric graphs}, J. Combin. Theory Ser. A
  \textbf{118} (2011), 1775--1793.

\bibitem[Mac60]{MacMahon60}
P.~MacMahon, \emph{Combinatory analysis}, Chelsea Publishing Co., New York,
  1960.

\bibitem[MZ08]{MikhalkinZharkov08}
G.~Mikhalkin and I.~Zharkov, \emph{Tropical curves, their {J}acobians and theta
  functions}, Curves and abelian varieties, Contemp. Math., vol. 465, Amer.
  Math. Soc., Providence, RI, 2008, pp.~203--230.

\end{thebibliography}
\bibliographystyle{amsalpha}

\end{document}